\newtheorem{theorem}{Theorem}[section]
\theoremstyle{definition}
\theoremstyle{remark}
\newtheorem{remark}[theorem]{Remark}
\numberwithin{equation}{section}
\begin{document}
\title[NSFD methods preserving general quadratic Lyapunov functions]
{Nonstandard finite difference methods preserving\\ general quadratic Lyapunov functions}
\author{Manh Tuan Hoang}
\address{Department of Mathematics, FPT University, Hoa Lac Hi-Tech Park, Km29 Thang Long Blvd, Hanoi, Viet Nam}
\email{tuanhm14@fe.edu.vn; hmtuan01121990@gmail.com}
\subjclass[2020]{Primary 34A45, 65L05}

%
\keywords{Nonstandard finite difference, Lyapunov stability theory, Lyapunov functions, Global asymptotic stability, Dynamic consistency}



%

\begin{abstract}
In this work, we consider a class of dynamical systems described by ordinary differential equations under the assumption that the global asymptotic stability (GAS) of equilibrium points is  established based on the Lyapunov stability theory with the help of quadratic Lyapunov functions. We employ the Micken's methodology to construct a family of explicit nonstandard finite difference (NSFD) methods preserving any given quadratic Lyapunov function $V$, i.e. they admit $V$ as a discrete Lyapunov function. Here, the  proposed NSFD methods are derived from a novel non-local approximation for the zero vector function.\par
Through rigorous mathematical analysis, we show that the constructed NSFD methods have the ability to preserve any given quadratic Lyapunov functions regardless of the values of the step size. As an important consequence, they are dynamically consistent with respect to the GAS of continuous-time dynamical systems. On the other hand, the positivity of the proposed NSFD methods is investigated. It is proved that they can also preserve the positivity of solutions of continuous-time dynamical systems.\par
Finally, the theoretical findings are supported by a series of illustrative numerical experiments, in which advantages of the NSFD methods are demonstrated.
\end{abstract}
\maketitle
\section{Introduction}\label{sec1}
We start by considering a general finite-dimensional dynamical system described by ordinary differential equations (ODEs) associated with initial values
\begin{equation}\label{eq:1}
\dot{y}(t) = f(y(t)), \quad y(0) = y_0 \in \mathbb{R}^n,
\end{equation}
where $y = [y_1, y_2, \ldots, y_n]^T: [0, \infty) \to \mathbb{R}^n$, $f = [f_1, f_2, \ldots, f_n]^T: \mathbb{R}^n \to \mathbb{R}^n$ and $\dot{y}$ stands for the first derivative of $y$. Throughout this paper, we always assume that the right-hand side function $f$ satisfies all needed smoothness assumptions such that solutions of \eqref{eq:1} exist and are unique (see, e.g., \cite{Ascher, Khalil, Stuart}). It should be emphasized that the dynamical system \eqref{eq:1} is often utilized to mathematically model important phenomena and processes in science and engineering \cite{Ascher, Khalil, Stuart}.\par
We recall that a point $y^* \in \mathbb{R}^n$ is called an \textit{equilibrium point} of \eqref{eq:1} if $f(y^*) = 0$ and an equilibrium point is said to be \textit{globally asymptotically stable} if it is \textit{stable} and \textit{globally attractive} \cite{Khalil, Stuart}. Without the loss of generality, we can always assume that $y^* = 0$. It is well-known that the global asymptotic stability (GAS) analysis of equilibrium points is a very important problem with various useful applications in fields of theory and practice, for example in mathematical physics, economics, control theory, biology, ecology and so on (see, for instance, \cite{Khalil, LaSalle, LaSalle1, Lyapunov, Stuart}). The Lyapunov stability theory with the help of suitable Lyapunov functions has been recognized as one of the most successful approaches to this problem (see, for example, \cite{Cangiotti, Duarte-Mermoud, Korobeinikov, Korobeinikov1, Korobeinikov2, Korobeinikov3, ORegan, Vargas-De-Leon, Vargas-De-Leon1, Vargas-De-Leon2}). For the sake of convenience, below we recall the Lyapunov stability theorem, which is also known as Barbashin-Krasovskii theorem, for the GAS of continuous-time dynamical systems \cite[Theorem 4.2]{Khalil}.
\begin{theorem}\cite[Theorem 4.2]{Khalil}\label{Lyapunov1}
Let $y^* = 0$ be an equilibrium point for \eqref{eq:1}. Let $V: \mathbb{R}^n \to \mathbb{R}$ be a continuously differentiable function such that:
\begin{equation}\label{eq:2}
\begin{split}
&V(0) = 0,\\
&V(y) > 0, \quad \forall y \ne 0,\\
&\|y\| \to \infty \Rightarrow \|V(t)\| \to \infty,\\
&\dot{V(y)} < 0, \quad \forall y \ne 0, 
\end{split}
\end{equation}
then $y^* = 0$ is globally asymptotically stable.
\end{theorem}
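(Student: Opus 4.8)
The plan is to establish the two properties that together constitute global asymptotic stability, namely Lyapunov stability of the origin and global attractivity, and then combine them.

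For stability, I would fix $\epsilon > 0$ and work on the sphere $S_\epsilon = \{y : \|y\| = \epsilon\}$. Since $V$ is continuous and positive definite, the quantity $\alpha := \min_{y \in S_\epsilon} V(y)$ is attained, because $S_\epsilon$ is compact, and it is strictly positive. Choosing any $\beta \in (0, \alpha)$, I consider the set $\Omega_\beta = \{y : \|y\| \leq \epsilon, \ V(y) \leq \beta\}$. Using continuity of $V$ together with $V(0) = 0$, there exists $\delta > 0$ such that $\|y\| < \delta$ implies $V(y) < \beta$, so the open ball of radius $\delta$ lies inside $\Omega_\beta$. The key step is that $\dot{V}(y) < 0$ forces $t \mapsto V(y(t))$ to be nonincreasing along every trajectory; hence any solution starting in $\Omega_\beta$ satisfies $V(y(t)) \leq \beta < \alpha$ for all $t \geq 0$, which prevents it from ever reaching $S_\epsilon$ and therefore confines it to the ball of radius $\epsilon$. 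This yields the $\epsilon$–$\delta$ characterization of stability.

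For global attractivity, I would first exploit radial unboundedness: the condition $\|y\| \to \infty \Rightarrow V(y) \to \infty$ guarantees that every sublevel set $\{y : V(y) \leq c\}$ is bounded, hence compact. Consequently, for an arbitrary initial value $y_0$, the trajectory remains trapped in the compact set $\{y : V(y) \leq V(y_0)\}$, which ensures global existence of the solution and its boundedness. Since $V(y(t))$ is nonincreasing and bounded below by $0$, it converges to a limit $c \geq 0$. The main obstacle is to rule out $c > 0$. Assuming $c > 0$, the trajectory would be confined, for all sufficiently large $t$, to the compact annular region $\{y : c \leq V(y) \leq V(y_0)\}$, which excludes the origin; on this set the continuous function $\dot{V}$ is strictly negative and therefore attains a maximum $-\gamma < 0$. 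Integrating $\dot{V}(y(t)) \leq -\gamma$ would give $V(y(t)) \leq V(y_0) - \gamma t \to -\infty$, contradicting $V \geq 0$. Hence $c = 0$.

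Finally, I would translate $V(y(t)) \to 0$ back into $y(t) \to 0$. If this failed, there would exist a sequence $t_k \to \infty$ and some $r > 0$ with $\|y(t_k)\| \geq r$; by boundedness of the trajectory, after passing to a subsequence, $y(t_k) \to \bar{y}$ with $\|\bar{y}\| \geq r$, so $\bar{y} \neq 0$ and $V(\bar{y}) > 0$ by positive definiteness, contradicting $V(y(t_k)) \to 0$ and the continuity of $V$. Therefore $y(t) \to 0$ for every initial condition, and together with the stability established above this proves that $y^* = 0$ is globally asymptotically stable. The delicate points throughout are the compactness arguments supplied by radial unboundedness and the uniform negativity of $\dot{V}$ away from the origin; an equivalent route through LaSalle's invariance principle is available but the direct contradiction argument keeps the proof self-contained.
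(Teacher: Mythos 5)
Your proposal is correct, and it is essentially the same argument as the paper's source: the paper does not prove this statement itself but cites it as Theorem 4.2 of Khalil, and your proof (stability via the minimum of $V$ on the sphere $S_\epsilon$ and invariance of the sublevel set $\Omega_\beta$; global attractivity via monotone convergence of $V(y(t))$, with radial unboundedness supplying compact sublevel sets and the maximum $-\gamma<0$ of $\dot V$ on a compact annulus yielding the contradiction) is precisely the classical Barbashin--Krasovskii argument given in that reference. No gaps.
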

Assume that the GAS of the dynamical system model \eqref{eq:1} has been established through a known Lyapunov function $V$ and we are using the following general one-step numerical method to approximate the solutions of \eqref{eq:1}
\begin{equation}\label{eq:3}
y^{k + 1} = \Phi(\Delta t, y^k),
\end{equation}
where $\Delta t > 0$ stands for the step size and $y^k$ is the intended approximation for $y(t_k)$ with $t_k = k\Delta t$ ($k \geq 1$). An essential requirement for the general numerical method \eqref{eq:3} is that it should preserve the Lyapunov function $V$ for all the step sizes $\Delta t$, i.e. \eqref{eq:3} admits the function $V$ as a discrete Lyapunov function. Consequently, we deduce from the Lyapunov stability theory for discrete-time dynamical systems \cite{Elaydi, Stuart} that the difference equation model \eqref{eq:3} preserves the GAS of \eqref{eq:1} regardless of the value of the step size. In this case, according to the Mickens' methodology \cite{Mickens1, Mickens2, Mickens3, Mickens4, Mickens5}, the numerical method is said to be dynamically consistent with respect to (w.r.t) the GAS of the counterpart differential equation model.\par
The study of numerical methods preserving Lyapunov functions is a very important problem, which has attracted the attention of many researchers (see, e.g., \cite{Calvo, Elliot, Grimm, McLachlan, McLachlan1, Stuart}. In an early work \cite{Grimm}, Grimm and Quispel proposed new projection-based methods that preserve a Lyapunov function as a Lyapunov function for discrete models generated by the numerical methods. Also, it was proved that this approach is very useful in various problems, in which a Lyapunov function is known. After that, Calvo et al. in \cite{Calvo} extended the previous work of Grimm and Quispel in \cite{Grimm} to construct new projected methods that preserve Lyapunov functions. These new methods are a modification of the Grimm and Quispel ones are derived from an explicit Runge-Kutta method provided with dense output, together with a quadrature formula with positive weights; however, the non-negativity of the coefficients $b_i$ of the underlying RK method is relaxed. Some numerical methods of low order for which a given Lyapunov function of a dynamical system is also a discrete Lyapunov function have been widely studied in \cite{Elliot, McLachlan, McLachlan1, Stuart}.\par
In this present work, we introduce a simple approach, which is based on the Mickens' methodology \cite{Mickens1, Mickens2, Mickens3, Mickens4, Mickens5} and differs from the approaches of geometric integration utilized in the above-mentioned works, to construct to a family of explicit nonstandard finite difference (NSFD) methods, which preserves a general quadratic Lyapunov function of the form:
\begin{equation}\label{eq:4}
V_{QLF}(y) = V_{QLF}(y_1, y_2, \ldots, y_n) := \sum_{i = 1}^n\alpha_i (y_i - y_i^*)^2,
\end{equation}
where $\alpha_i > 0$ are coefficients of the Lyapunov function, $y^* = (y_1^*, y_2^*, \ldots, y_n^*)$ is a globally asymptotically stable equilibrium point of \eqref{eq:1}. The NSFD methods are derived from a novel weighted non-local approximation for the zero vector function. Through rigorous mathematical analysis, we determine easily-verified conditions which ensure that the NSFD methods preserve the quadratic Lyapunov function $V_{QLF}$ defined in \eqref{eq:4} for all the finite step sizes $\Delta t$, that is the variation of $V_{QLF}$ satisfies
\begin{equation}\label{eq:5}
\Delta V_{QLF}(y^k) :=  V_{QLF}(y^{k + 1}) - V_{QLF}(y^k) < 0
\end{equation}
for all $y^k := (y_1^k, y_2^k, \ldots, y_n^k) \ne y^*$ and $\Delta t > 0$. Consequently, the NSFD methods preserve the GAS of the counterpart differential equation models. It should be emphasized that the approach based on weighted non-local approximations for the zero function has been utilized in our recent works to formulate NSFD methods for ODEs \cite{Hoang5, Hoang6, HoangMatthias, HoangMatthias1}. \par
It is worthy noting that the construction of NSFD methods preserving the GAS of differential equation has been intensively investigated in some previous works (see, for example, \cite{Biwas, Cui, DangHoang1, DangHoang2, DangHoang3, DangHoang4, Ding, Hoang5new, Hoang5, Yang}). However, to the best of our knowledge, results regarding NSFD methods preserving general Lyapunov functions are very scarce up to now. We can consider the last condition of \eqref{eq:2} as a monotonic property w.r.t the function $V$; therefore, the NSFD methods also preserve this property.\par
Along with the GAS with Lyapunov functions, we also consider dynamical systems of the form \eqref{eq:1} whose solutions satisfy the positivity \cite{Horvath, Smith}. We show proved that the constructed NSFD methods can also preserve the positivity of solutions of continuous-time dynamical systems.\par
The plan of this work is as follows:  The NSFD methods are proposed and analyzed in Section \ref{sec2}. A series of numerical examples are conducted in Section \ref{sec3}. Section \ref{sec4} is devoted to investigating the positivity of the NSFD models. In the last section, we provide some concluding remarks and open problems.
\section{NSFD methods preserving quadratic Lyapunov functions}\label{sec2}
This section is devoted to constructing a family of explicit NSFD methods that preserve the general quadratic Lyapunov function $V_{QLF}$ defined in \eqref{eq:4}. Note that the derivative of $V_{QLF}$ along the solutions of \eqref{eq:1} satisfies
\begin{equation}\label{eq:6}
\dot{V}_{QLF}(y) =  2\sum_{i = 1}^n\alpha_i(y_i - y_i^*)f_i(y) < 0, \quad \forall y \ne y^*.
\end{equation}
So, numerical methods should provide numerical approximations that satisfy \eqref{eq:5}.\par
First, we replace the first derivatives $\dot{y}_i(t)$ ($1 \leq i \leq n$) by
\begin{equation}\label{eq:7}
\dot{y}_i(t)|_{t = t_k} \approx \dfrac{y_i^{k + 1} - y_i^k}{\phi(\Delta t)},
\end{equation}
where $\phi(\Delta t)$ is called a denominator function, which is positive and $\phi(\Delta t) = \Delta t + \mathcal{O}(\Delta t^2)$ as $\Delta t \to 0$ \cite{Mickens1, Mickens2, Mickens3, Mickens4, Mickens5}. Next, by applying the rules of construction of NSFD \cite{Mickens1, Mickens2, Mickens3, Mickens4, Mickens5}, we approximate the right-hand side function $f_i(y(t))$ $(1 \leq i \leq n)$ by
\begin{equation}\label{eq:8}
\begin{split}
f_i(y(t_k)) &= f_i(y(t_k)) + \big(y_i(t_k)\tau(y(t_k)) - y_i(t_k)\tau(y(t_k))\big)\\
&\approx f_i(y^k) + \big(y_i^k\tau(y^k) - y_i^{k + 1}\tau(y^k)\big),
\end{split}
\end{equation}
where $\tau(y): \mathbb{R}^n \to \mathbb{R}$ is a function satisfying $\tau(y) \geq 0$ for all $y \in \mathbb{R}^n$, which plays a role as a weight. The term $y_i^k\tau(y^k) - y_i^{k + 1}\tau(y^k)$ can be considered as a non-local approximation of the zero function. The approximations \eqref{eq:7} and \eqref{eq:8} lead to the following family of NSFD methods
\begin{equation}\label{eq:9}
\dfrac{y_i^{k + 1} - y_i^k}{\phi(\Delta t)} = f_i(y^k) + y_i^k\tau(y^k) - y_i^{k + 1}\tau(y^k).
\end{equation}
The explicit form of  \eqref{eq:9} is given by
\begin{equation}\label{eq:10}
y_i^{k + 1} = \dfrac{y_i^k + \phi(\Delta t)\big(f_i(y^k) + y_i^k\tau(y^k)\big)}{1 + \phi(\Delta t)\tau(y^k)} = y_i^k + \dfrac{\phi(\Delta t)f_i(y^k)}{1 + \phi(\Delta t) \tau(y^k)}.
\end{equation}
We now determine conditions for the function $\tau(y)$ such that \eqref{eq:9} or also \eqref{eq:10} preserves the general quadratic Lyapunov function \eqref{eq:4}.
\begin{theorem}\label{MainTheorem1}
Let $\tau(y)$ be a function with the property that for $y \ne y^*$
\begin{equation}\label{eq:10new}
\tau(y) \geq \tau_L(y) := -\dfrac{\mathlarger{\mathlarger{\sum}}_{i = 1}^n\alpha_i (f_i(y))^2}{\dot{V}_{QLF}(y)},
\end{equation}
where $\dot{V}_{QLF}(y)$ is given in \eqref{eq:6}. Then, the NSFD model \eqref{eq:9} admits the quadratic Lyapunov function $V_{QLF}$ defined in \eqref{eq:4} as a discrete Lyapunov function for all the values of the step size $\Delta t$.
\end{theorem}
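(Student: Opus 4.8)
The plan is to evaluate the one-step variation $\Delta V_{QLF}(y^k) = V_{QLF}(y^{k+1}) - V_{QLF}(y^k)$ explicitly from the update formula \eqref{eq:10} and to show that the bound \eqref{eq:10new} on $\tau$ forces this variation to be strictly negative for every $y^k \ne y^*$ and every $\Delta t > 0$. Throughout I would write $\phi := \phi(\Delta t)$ and $\tau := \tau(y^k)$ and note at the outset that $1 + \phi\tau > 0$, because $\phi > 0$ is part of the definition of a denominator function and $\tau \ge 0$ by assumption; this positivity is what makes \eqref{eq:10} well defined and keeps all the denominators appearing below from vanishing or changing sign. From \eqref{eq:10} the coordinatewise increment is $y_i^{k+1} - y_i^k = \phi f_i(y^k)/(1 + \phi\tau)$.

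First I would expand each summand of $V_{QLF}$ via the identity $(y_i^{k+1} - y_i^*)^2 - (y_i^k - y_i^*)^2 = (y_i^{k+1} - y_i^k)\bigl[(y_i^{k+1} - y_i^k) + 2(y_i^k - y_i^*)\bigr]$, substitute the increment above, and sum against the weights $\alpha_i$. This yields
\begin{equation*}
\Delta V_{QLF}(y^k) = \frac{\phi^2}{(1+\phi\tau)^2}\sum_{i=1}^n \alpha_i\bigl(f_i(y^k)\bigr)^2 + \frac{2\phi}{1+\phi\tau}\sum_{i=1}^n \alpha_i (y_i^k - y_i^*) f_i(y^k).
\end{equation*}
The \emph{crucial observation} is that the last sum equals $\tfrac12\dot{V}_{QLF}(y^k)$ by \eqref{eq:6}, so the whole variation is expressed entirely in terms of the two quantities $S := \sum_{i=1}^n \alpha_i (f_i(y^k))^2 \ge 0$ and $\dot{V}_{QLF}(y^k) < 0$ that already appear in the definition of $\tau_L$.

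Finally I would factor out the strictly positive quantity $\phi/(1+\phi\tau)^2$ to rewrite the variation as $\Delta V_{QLF}(y^k) = \phi(1+\phi\tau)^{-2}\bigl[\dot{V}_{QLF}(y^k) + \phi\bigl(S + \tau\,\dot{V}_{QLF}(y^k)\bigr)\bigr]$, reducing the whole claim to showing the bracket is negative. Here the hypothesis enters: multiplying $\tau \ge -S/\dot{V}_{QLF}(y^k)$ through by the negative number $\dot{V}_{QLF}(y^k)$ reverses the inequality and gives $S + \tau\,\dot{V}_{QLF}(y^k) \le 0$, whence the bracket is at most $\dot{V}_{QLF}(y^k) < 0$ for every $\phi > 0$. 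I expect the only genuinely delicate point to be the sign bookkeeping in this last step --- keeping track that $\dot{V}_{QLF}(y^k)$ is negative, so that multiplying by it flips the inequality --- together with the clean algebraic factorization that isolates a manifestly positive prefactor; once the variation is written in the factored form the conclusion $\Delta V_{QLF}(y^k) < 0$ is immediate, and its step-size independence is automatic, since $\phi$ enters only through the positive prefactor and through the nonpositive term $\phi\bigl(S + \tau\,\dot{V}_{QLF}(y^k)\bigr)$.
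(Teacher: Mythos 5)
Your proposal is correct and follows essentially the same route as the paper's proof: expand the difference of squares, substitute the explicit update \eqref{eq:10}, recognize the cross term as $\tfrac12\dot{V}_{QLF}(y^k)$ via \eqref{eq:6}, factor out a positive prefactor, and use the hypothesis \eqref{eq:10new} (with the sign flip from multiplying by the negative quantity $\dot{V}_{QLF}(y^k)$) to conclude the bracket is at most $\dot{V}_{QLF}(y^k) < 0$. The only cosmetic difference is that you factor out $\phi/(1+\phi\tau)^2$ in a single step, whereas the paper factors out $\phi/(1+\phi\tau)$ and then clears the remaining denominator; the inequality argument is identical.
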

\begin{proof}
We only need to verify that \eqref{eq:5} is satisfied. Indeed, by using \eqref{eq:10} we have
\begin{equation}\label{eq:12}
\begin{split}
\Delta V_{QLF}(y^k) &:= V_{QLF}(y^{k + 1}) - V_{QLF}(y^k)\\
&= \sum_{i = 1}^n\alpha_i(y_i^{k + 1} - y_i^*)^2 - \sum_{i = 1}^n\alpha_i\big(y_i^k - y_i^*)^2\\
&= \sum_{i = 1}^n\alpha_i\big[(y_i^{k + 1} - y_i^*)^2 - \big(y_i^k - y_i^*)^2\big]\\
&= \sum_{i = 1}^n\alpha_i\big[(y_i^{k + 1} - y_i^k)(y_i^{k + 1} + y_i^k - 2y_i^*)\big]\\
&= \mathlarger{\mathlarger{\sum}}_{i = 1}^n\alpha_i\dfrac{\phi(\Delta t)f_i(y^k)}{1 + \phi(\Delta t) \tau(y^k)}\bigg(2y_i^k - 2y_i^* + \dfrac{\phi(\Delta t)f_i(y^k)}{1 + \phi(\Delta t)\tau(y^k)}\bigg)\\
&= \mathlarger{\mathlarger{\sum}}_{i = 1}^n\alpha_i\dfrac{\phi(\Delta t)f_i(y^k)}{1 + \phi(\Delta t)\tau(y^k)}\big(2y_i^k - 2y_i^*) + \mathlarger{\mathlarger{\sum}}_{i = 1}^n\alpha_i\bigg(\dfrac{\phi(\Delta t)f_i(y^k)}{1 + \phi(\Delta t) \tau(y^k)}\bigg)^2\\
&= \dfrac{\phi(\Delta t)}{1 + \phi(\Delta t)\tau(y^k)}\mathlarger{\mathlarger{\sum}}_{i = 1}^n2\alpha_if_i(y^k)(y_i^k - y_i^*) + \mathlarger{\mathlarger{\sum}}_{i = 1}^n\alpha_i\bigg(\dfrac{\phi(\Delta t)f_i(y^k)}{1 + \phi(\Delta t) \tau(y^k)}\bigg)^2\\
&= \dfrac{\phi(\Delta t)}{1 + \phi(\Delta t)\tau(y^k)}\dot{V}_{QLF}(y^k) + \mathlarger{\mathlarger{\sum}}_{i = 1}^n\alpha_i\bigg(\dfrac{\phi(\Delta t)f_i(y^k)}{1 + \phi(\Delta t) \tau(y^k)}\bigg)^2\\
&= \dfrac{\phi(\Delta t)}{1 + \phi(\Delta t)\tau(y^k)}\bigg[\dot{V}_{QLF}(y^k) + \mathlarger{\mathlarger{\sum}}_{i = 1}^n\alpha_i\dfrac{\phi(\Delta t)}{1 + \phi(\Delta t)\tau(y^k)}\big(f_i(y^k)\big)^2\bigg].
\end{split}
\end{equation}
It follows from \eqref{eq:12} that $\Delta V_{QLF}(y^k) < 0$ if and only if
\begin{equation}\label{eq:13}
\mathcal{L} := \dot{V}_{QLF}(y^k) + \dfrac{\phi(\Delta t)}{1 + \phi(\Delta t)\tau(y^k)}\mathlarger{\mathlarger{\sum}}_{i = 1}^n\alpha_i\big(f_i(y^k)\big)^2 < 0,
\end{equation}
which is equivalent to (since $1 + \phi(\Delta t)\tau(y^k) > 0$)
\begin{equation*}\label{eq:14}
\dot{V}_{QLF}(y^k) + \phi(\Delta t)\bigg[\tau(y^k)\dot{V}_{QLF}(y^k) + \mathlarger{\mathlarger{\sum}}_{i = 1}^n\alpha_i\big(f_i(y^k)\big)^2\bigg] < 0.
\end{equation*}
It is easy to check that if \eqref{eq:10new} then $\dot{V}_{QLF}(y^k)\tau(y^k) + \sum_{i = 1}^n\alpha_i(f_i(y^k))^2 \leq 0$. Consequently, $\mathcal{L}$ in \eqref{eq:13} is negative. Therefore, we conclude that $\Delta V_{QLF}(y^k)$ in \eqref{eq:12} is negative provided that \eqref{eq:10new} holds. The proof is complete.
\end{proof}
\begin{remark}
The function $\tau_L(y)$ in \eqref{eq:10new} satisfies $\tau_L(y) \geq 0$ for all $y \in \mathbb{R}$. So, $\tau(y) \geq 0$ for all $y \in \mathbb{R}$.
%
For each function $\tau(y)$, we obtain an NSFD method preserving the quadratic Lyapunov functions. In general, we can take
\begin{equation*}
\tau(y) = \tau_L(y) + g(y),
\end{equation*}
where $g(y)$ satisfies $g(y) \geq 0$ for $y \in \mathbb{R}^n$.
\end{remark}
\begin{remark}\label{remark2new}
The non-local approximation \eqref{eq:8} and the positivity of $\tau(y)$ ensure that the NSFD model \eqref{eq:9} is defined and consistent; hence, it is convergent of order one (see, for e.g., \cite{Ascher, Cresson}).
\end{remark}
\section{Numerical examples}\label{sec3}
In this section, we report illustrate numerical examples to support the findings presented in Section \ref{sec2}. For this purpose, consider the following system, which generates a system considered in \cite[Example 2]{Ghaffari}, as a test problem
\begin{equation}\label{eq:32}
\begin{split}
&\dot{y}_1 = -Ay_1^3 + By_2,\\
&\dot{y}_2 = -Cy_1 - Dy_2^3,
\end{split}
\end{equation}
where $A, B, C$ and $D$ are positive constants. The system \eqref{eq:32} admits the quadratic function
\begin{equation}\label{eq:32new}
V(y_1, y_2) = \alpha_1 y_1^2 + \alpha_2 y_2^2, \quad \alpha_1 = \dfrac{C}{B}\alpha_2,
\end{equation}
as a Lyapunov function, which implies the GAS of the origin $(0, 0)$.\par
We will apply the NSFD model \eqref{eq:9} for the system \eqref{eq:32}. In this case, we have
\begin{equation*}
\begin{split}
&f_1(y_1, y_2) = -Ay_1^3 + By_2,\\
&f_2(y_1, y_2) = -Cy_1 - Dy_2^3,\\
&\dot{V}(y_1, y_2) = -2\alpha_1A y_1^4 - 2\alpha_2D y_2^4. 
\end{split}
\end{equation*}
So, the function $\tau_L$ in \eqref{eq:10new} is given by
\begin{equation*}
\tau_L(y) = \dfrac{\alpha_1(-Ay_1^3 + By_2)^2 + \alpha_2(-Cy_1^4 - Dy_2^4)^2}{2A\alpha_1y_1^4 + 2D\alpha_2y_2^4}.
\end{equation*}
We choose the function $\tau(y)$ for \eqref{eq:9} as
\begin{equation*}
\tau(y) = \tau_L(y) + 0.001,
\end{equation*}
which satisfies \eqref{eq:10new}.\par
We now consider \eqref{eq:32} with the following set of the parameters
\begin{equation*}
(A, B, C, D) = (0.16,\, 1,\, 1,\, 0.1)
\end{equation*}
and the initial data
\begin{equation*}
(y_1(0), y_2(0)) = (0.5,\,\, 0.01)
\end{equation*}
First, let us observe the behaviour of \eqref{eq:32} over a long time period, namely $0 \leq t \leq 1000$. For this, we employ the classical four-stage Runge-Kutta (RK4) method \cite{Ascher, Stuart} with $\Delta t = 10^{-4}$ to obtain a reference solution. The reference solution is sketched in Figure \ref{Fig:1}. It is clear that the components $y_1(t)$ and $y_2(t)$ are stable and decrease to $0$, whereas the phase plane forms a spiral into the origin.\par
Next, we utilize the standard Euler method and  a two-stage Runge-Kutta method (RK2), namely the standard explicit trapezoidal method, to solve the system \eqref{eq:32}. Numerical approximations are depicted in Figures \ref{Fig:2}-\ref{Fig:3.1}. It is observed that these standard methods provide the numerical approximations that form two spirals away from the initial position (see Figures \ref{Fig:2} and \ref{Fig:3}). Also, the quadratic Lyapunov function in \eqref{eq:32new} is not preserved (see Figures \ref{Fig:2.1} and \ref{Fig:3.1}).\par
Now, for the purpose of comparison, we apply the NSFD method \eqref{eq:9} to simulate the system \eqref{eq:32}, which uses $\phi(\Delta t) = \Delta t$ and three different step sizes, namely $\Delta t \in \{1.0,\,\,0.8,\,\,0.001\}$. Numerical solutions are sketched in Figures \ref{Fig:4}-\ref{Fig:7}. We see from these figures that the GAS of the origin and the dynamics of the continuous-time system are exactly preserved (see Figures \ref{Fig:4}). On the other hand, the Lyapunov function in \eqref{eq:32new} is also preserved (see Figures \ref{Fig:5}-\ref{Fig:7}).\par
Before ending this section, we simulate the dynamics of \eqref{eq:32} by employing the NSFD model \eqref{eq:9} with $\Delta t \in \{1.0,\,\, 0.001\}$. The numerical approximations are shown in Figure \ref{Fig:89}. Clearly, the GAS of the origin and the behaviour of the continuous-time model are reflected exactly regardless of the chosen step sizes.\par
In conclusion, the numerical examples are consistent and hence, provide evidence supporting  the theoretical findings.
%
\begin{figure}[H]
\subfloat[$y_1$ component.]{%
\includegraphics[height=9cm,width=6cm]{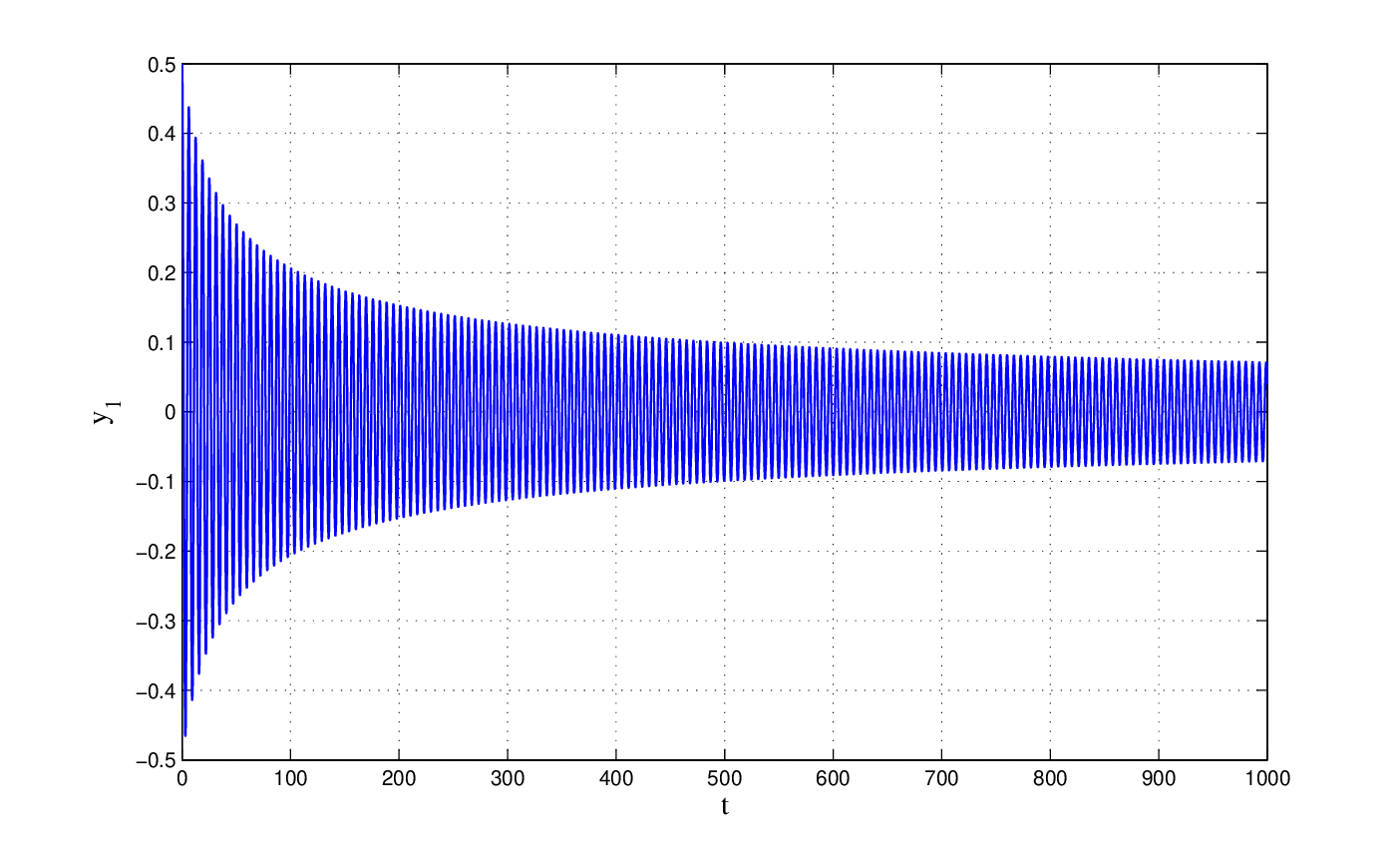}
\label{Figure:1a}
}\hfill
\subfloat[$y_2$ component.]{%
\includegraphics[height=9cm,width=6cm]{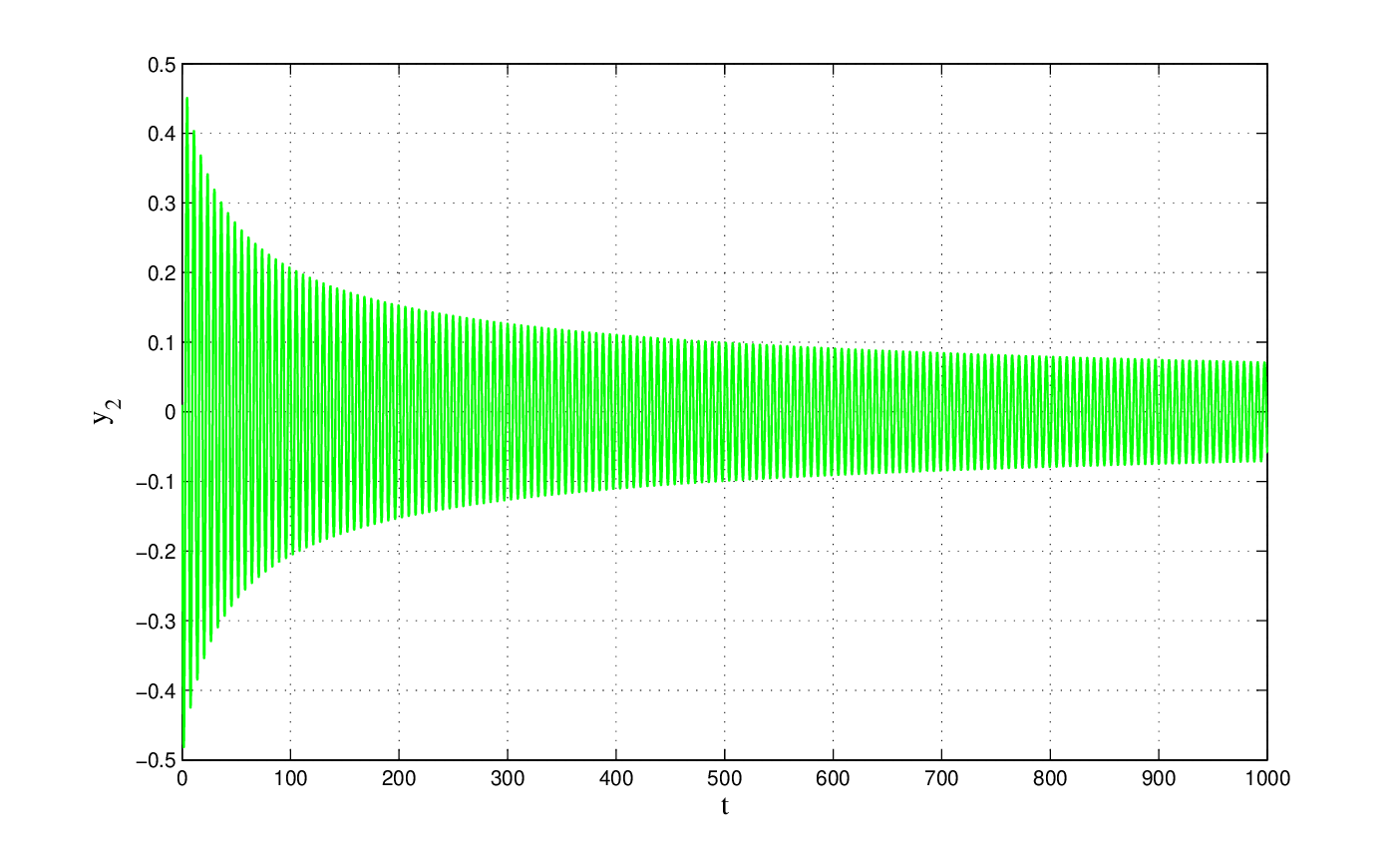}
\label{Figure:1b}
}\hfill
\subfloat[The phase plane.]{%
\includegraphics[height=9cm,width=12cm]{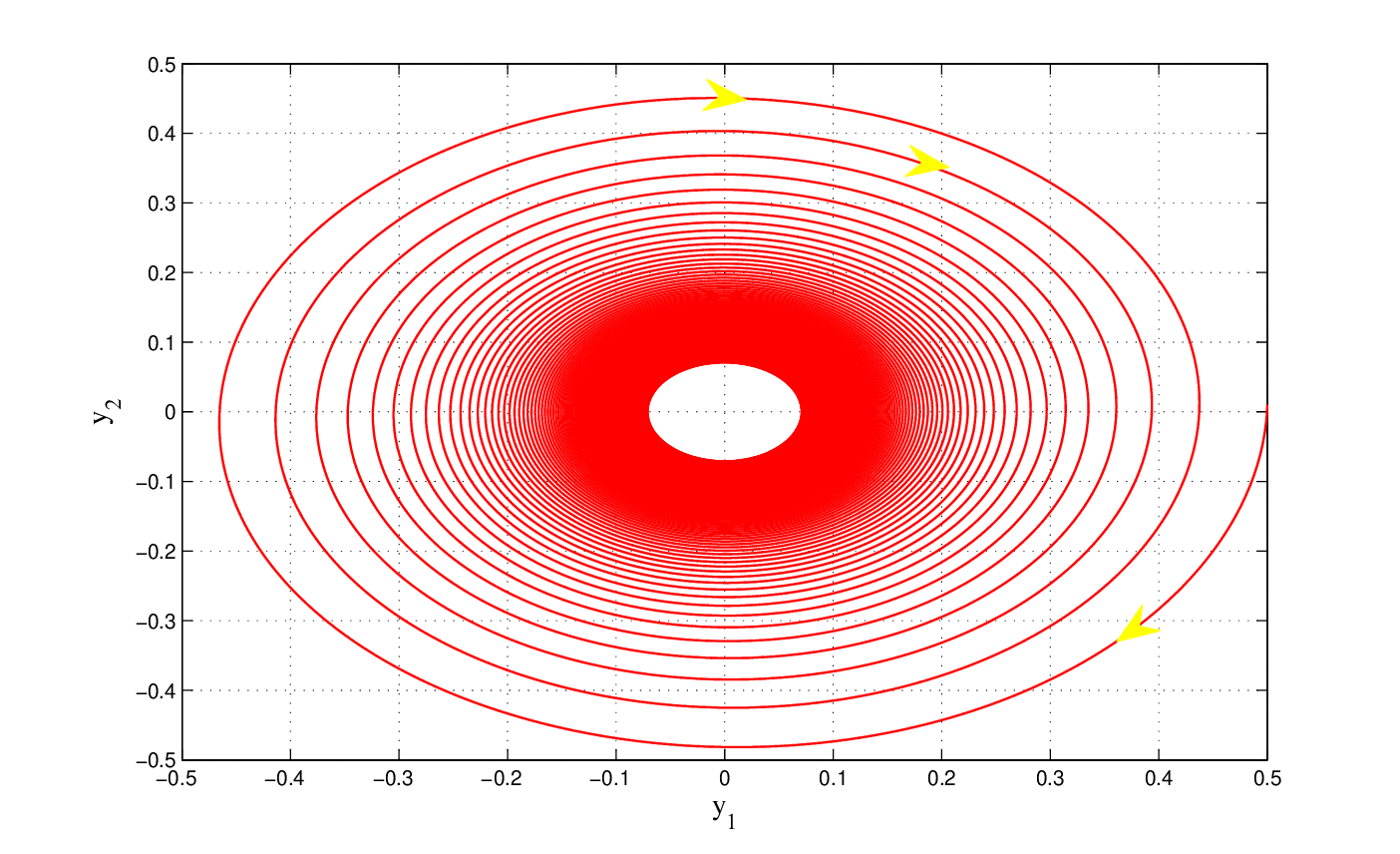}
\label{Figure:1c}
}
\caption{The reference solution generated by employing the RK4 method with $\Delta t = 10^{-4}$.}
\label{Fig:1}
\end{figure}
%
%
\begin{figure}[H]
\subfloat[$y_1$ component.]{%
\includegraphics[height=9cm,width=6cm]{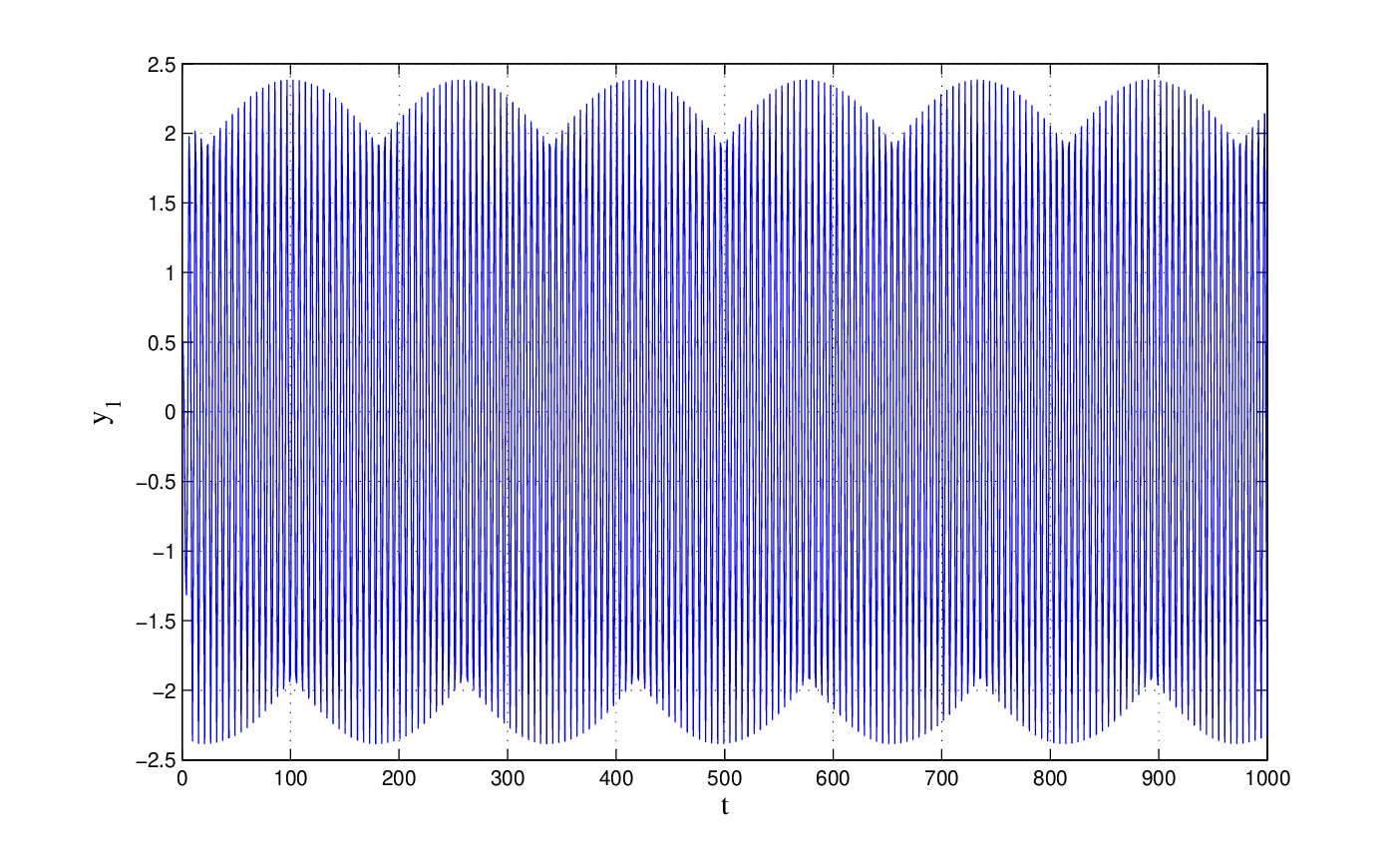}
\label{Figure:2a}
}\hfill
\subfloat[$y_2$ component.]{%
\includegraphics[height=9cm,width=6cm]{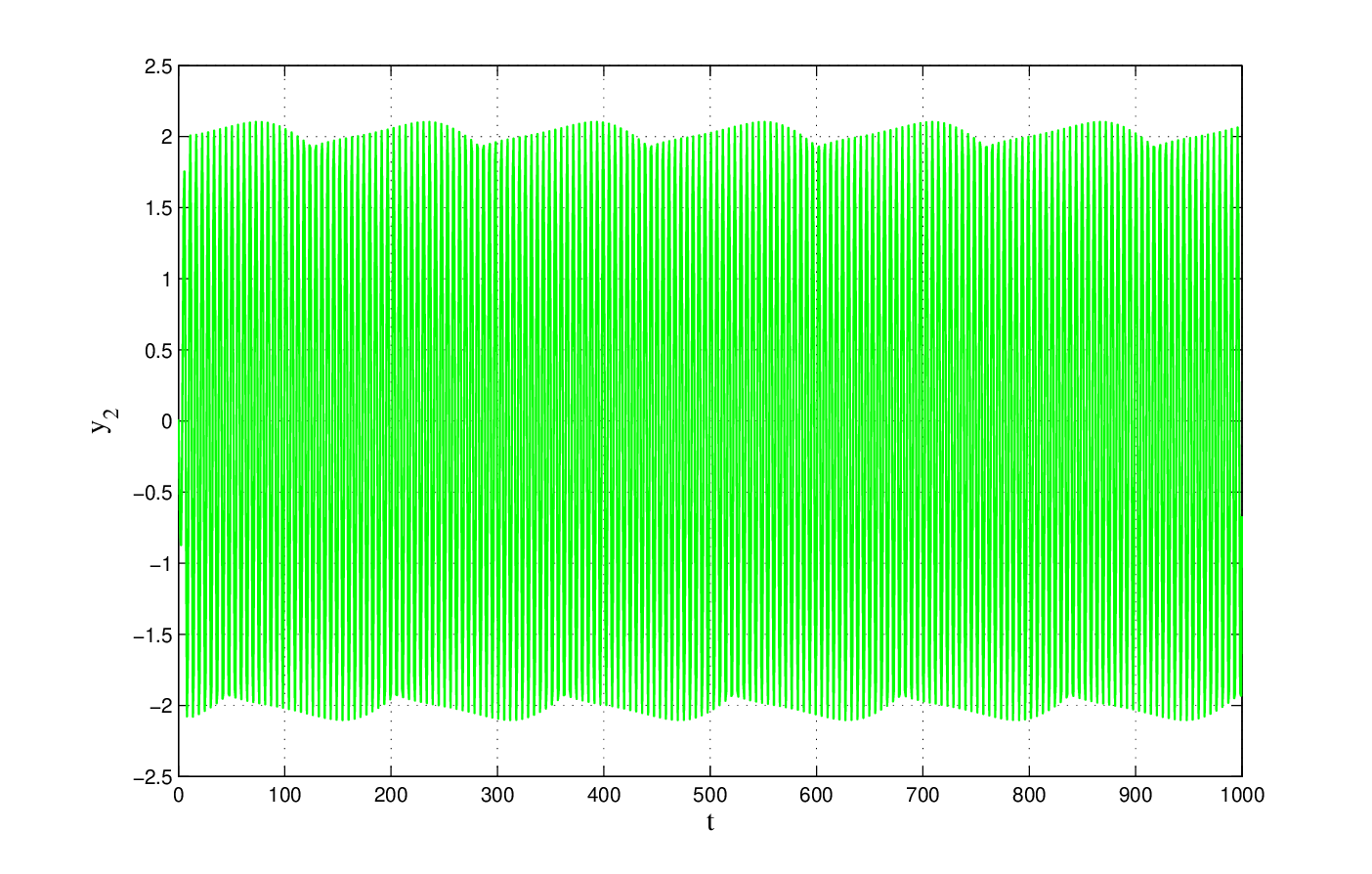}
\label{Figure:2b}
}\hfill
\subfloat[The phase plane.]{%
\includegraphics[height=9cm,width=12cm]{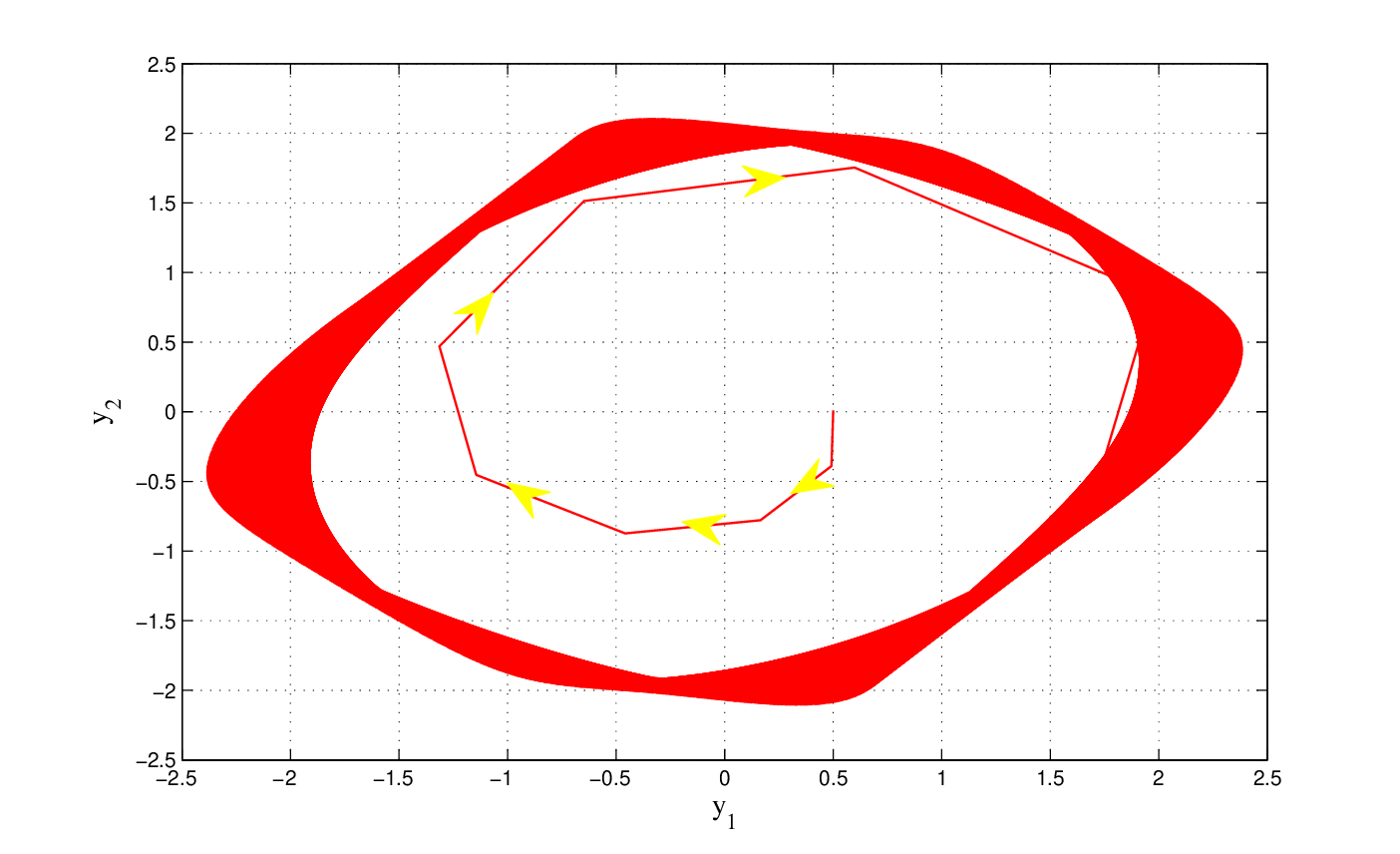}
\label{Figure:2c}
}
\caption{The approximations generated by employing the Euler method with $\Delta t = 0.8$.}
\label{Fig:2}
\end{figure}
\begin{figure}[H]
\subfloat[$V(y^k) = (y_1^k)^2 + (y_2^k)^2$.]{%
\includegraphics[height=9cm,width=12cm]{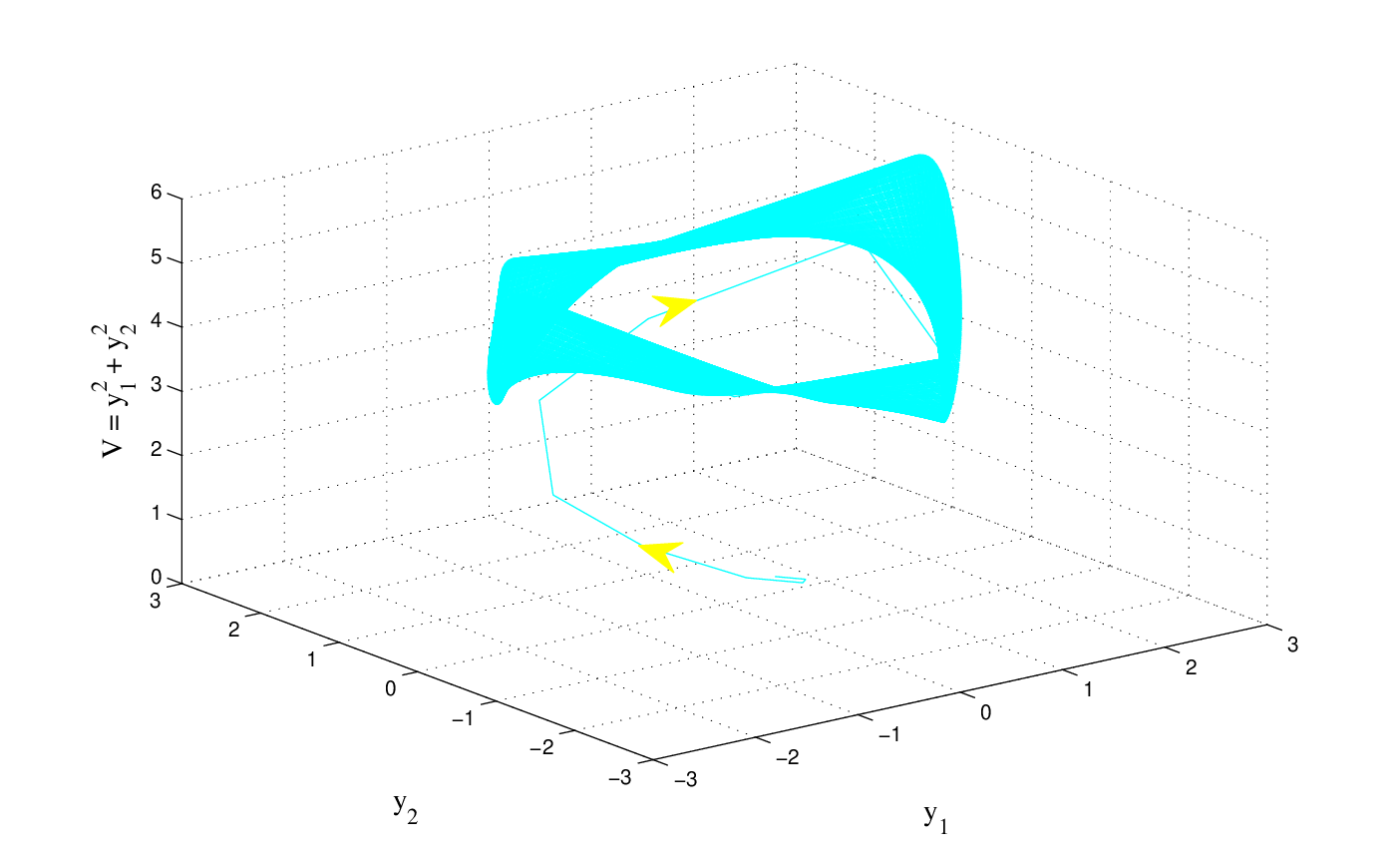}
\label{Figure:2d}
}\hfill
\subfloat[$\Delta V(y^k) = V(y^{k + 1}) - V(y^k)$.]{%
\includegraphics[height=9cm,width=12cm]{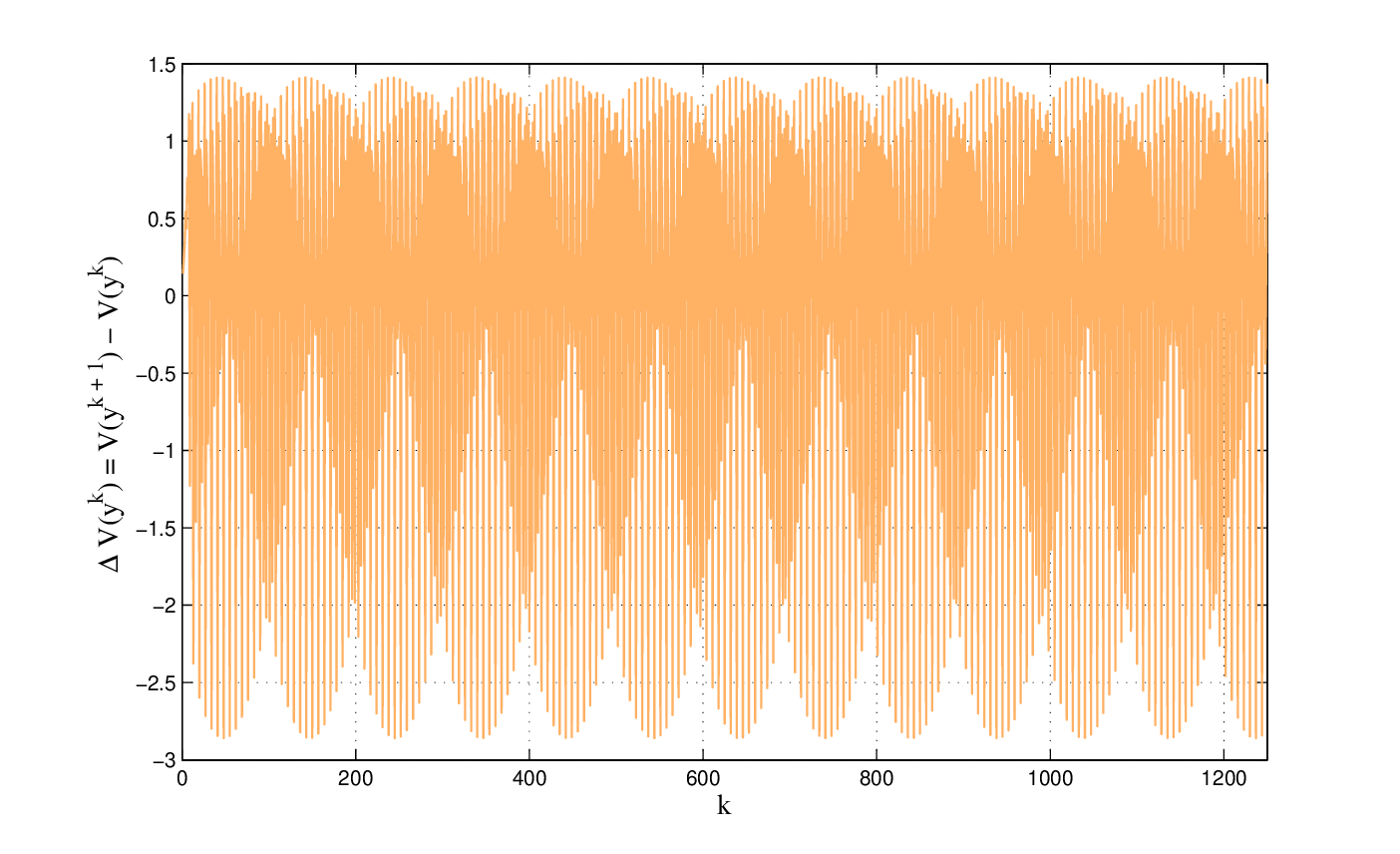}
\label{Figure:2e}
}
\caption{The discrete Lyapunov function and its variation corresponding to the Euler method using $\Delta t = 0.8$.}
\label{Fig:2.1}
\end{figure}
\begin{figure}[H]
\subfloat[$y_1$ component.]{%
\includegraphics[height=9cm,width=6cm]{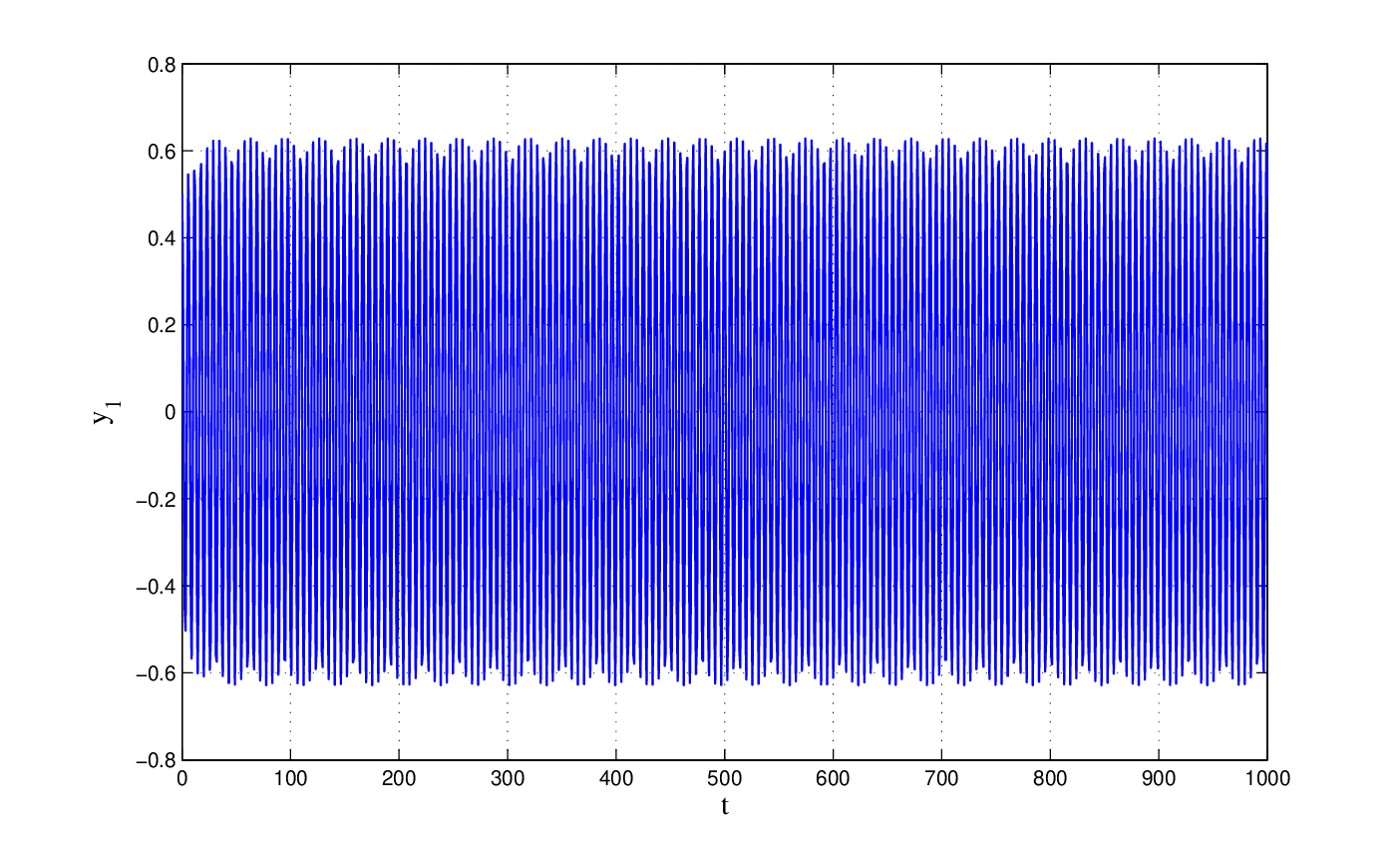}
\label{Figure:3a}
}\hfill
\subfloat[$y_2$ component.]{%
\includegraphics[height=9cm,width=6cm]{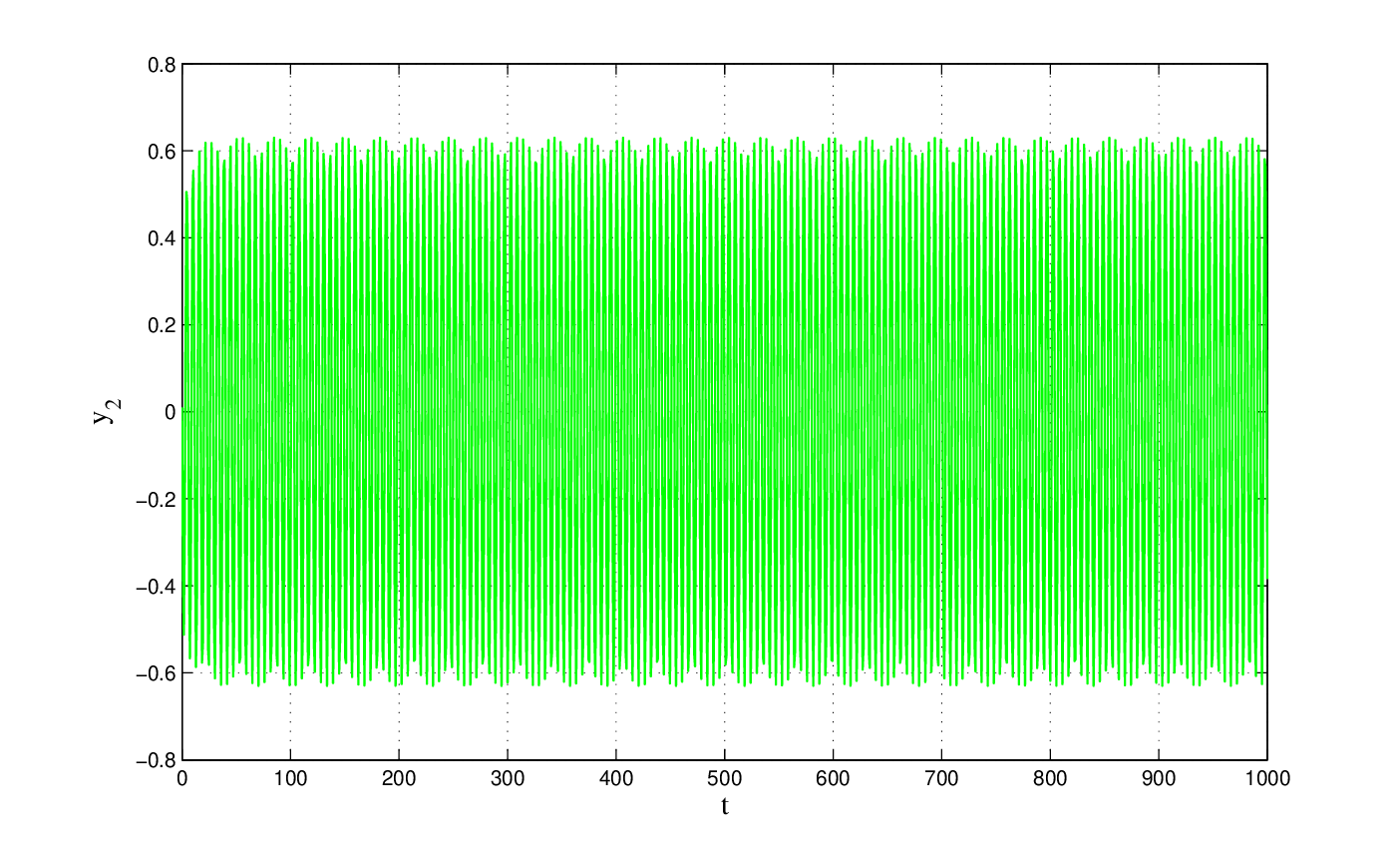}
\label{Figure:3b}
}\hfill
\subfloat[The phase plane.]{%
\includegraphics[height=9cm,width=12cm]{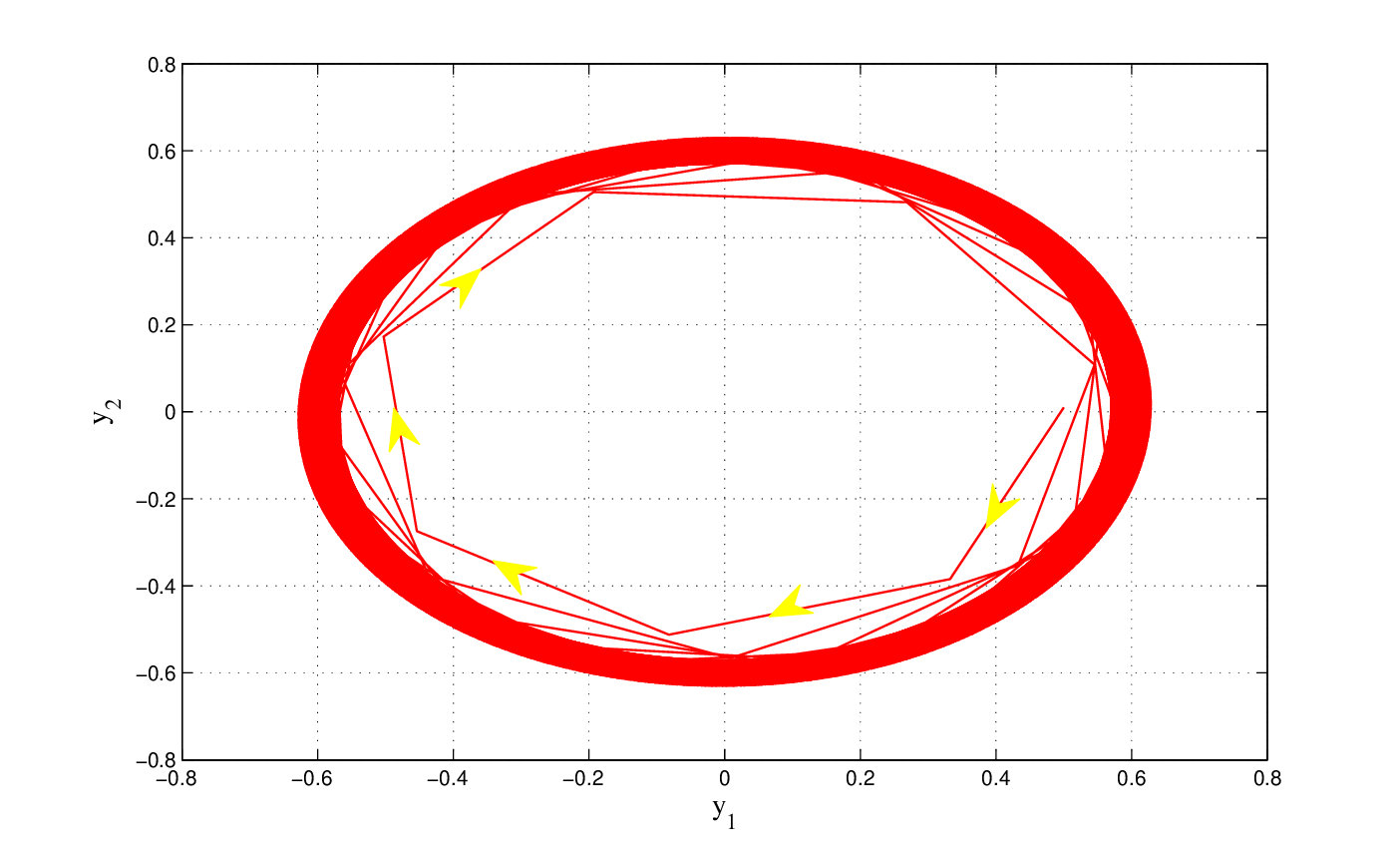}
\label{Figure:3c}
}
\caption{The approximated generated by employing the RK2 method with $\Delta t = 0.8$.}
\label{Fig:3}
\end{figure}
\begin{figure}[H]
\subfloat[$V(y^k) = (y_1^k)^2 + (y_2^k)^2$.]{%
\includegraphics[height=9cm,width=12cm]{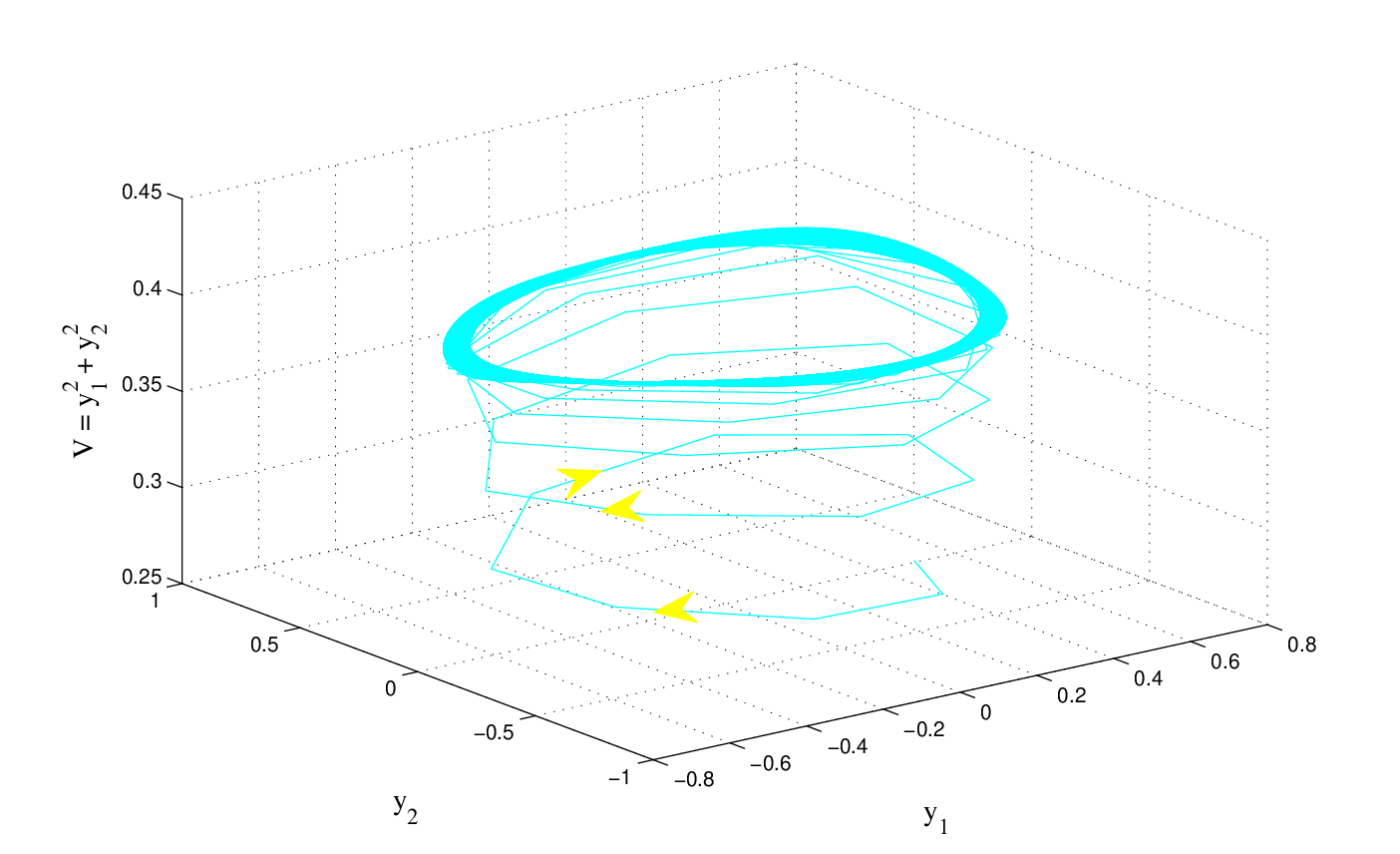}
\label{Figure:3d}
}\hfill
\subfloat[$\Delta V(y^k) = V(y^{k + 1}) - V(y^k)$.]{%
\includegraphics[height=9cm,width=12cm]{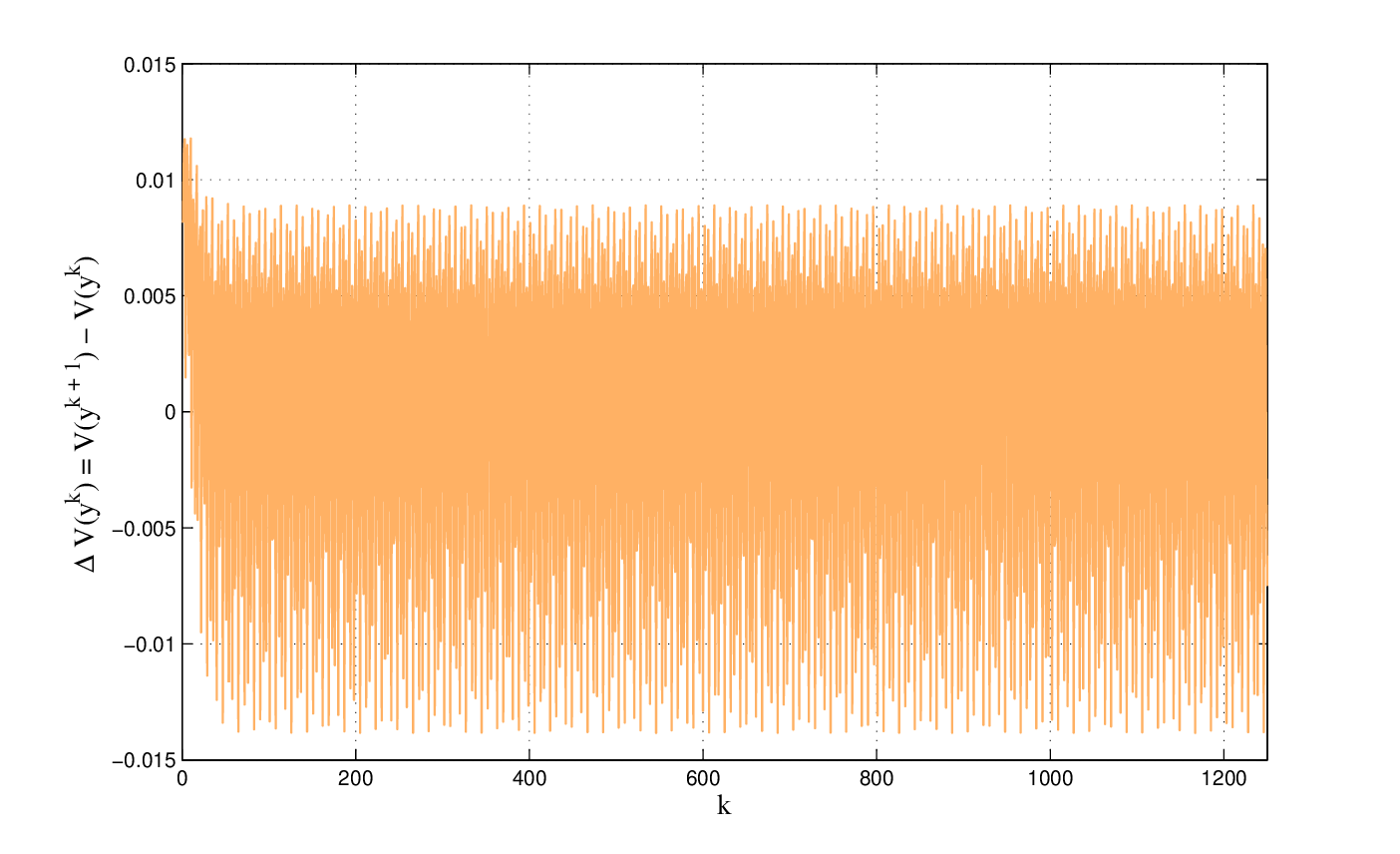}
\label{Figure:3e}
}
\caption{The discrete Lyapunov function and its variation corresponding to the RK2 method using $\Delta t = 0.8$.}
\label{Fig:3.1}
\end{figure}
\begin{figure}[H]
\subfloat[$\Delta t = 1.0$.]{%
\includegraphics[height=9cm,width=6cm]{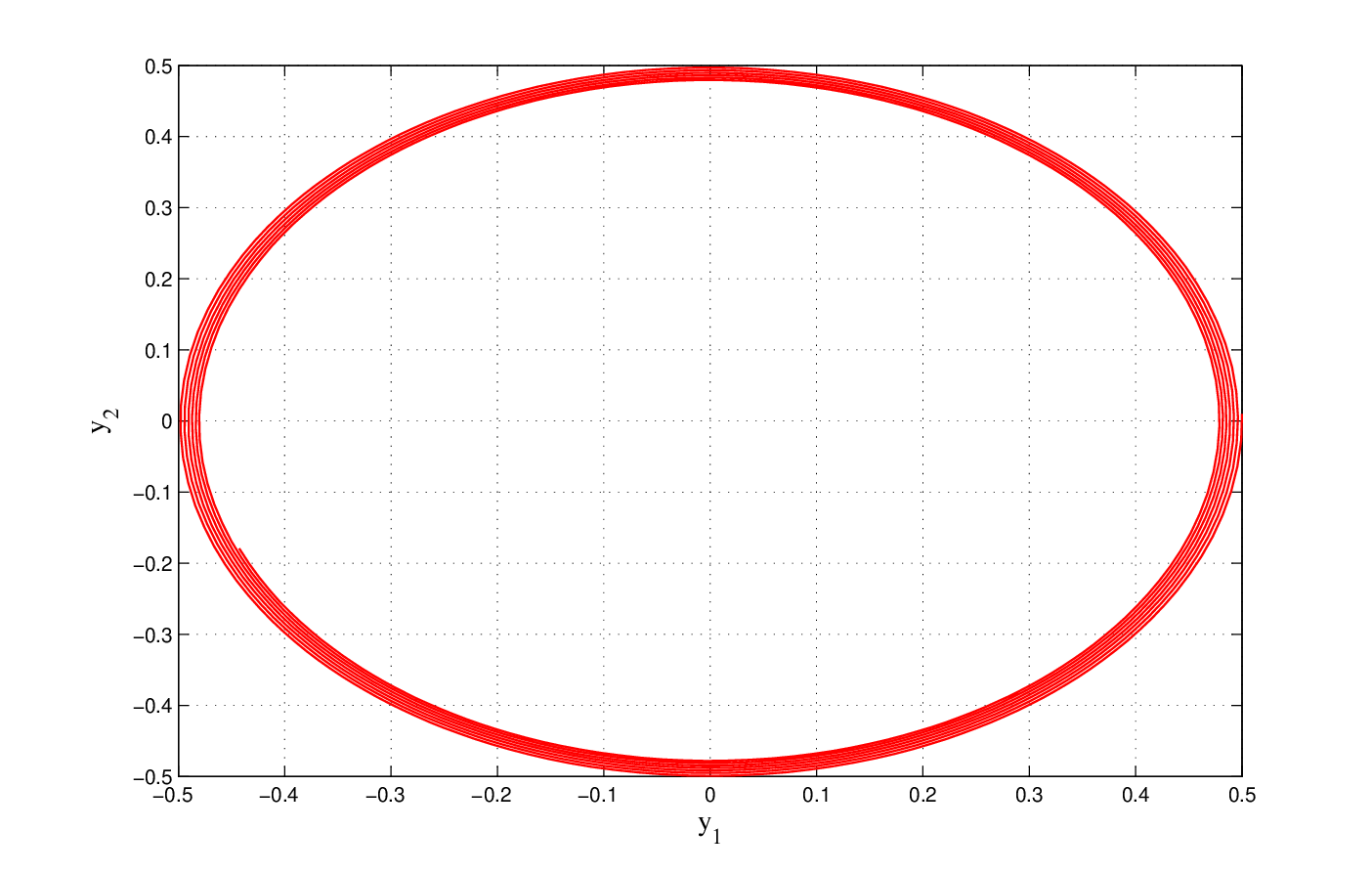}
\label{Figure:4a}
}\hfill
\subfloat[$\Delta t = 0.8$.]{%
\includegraphics[height=9cm,width=6cm]{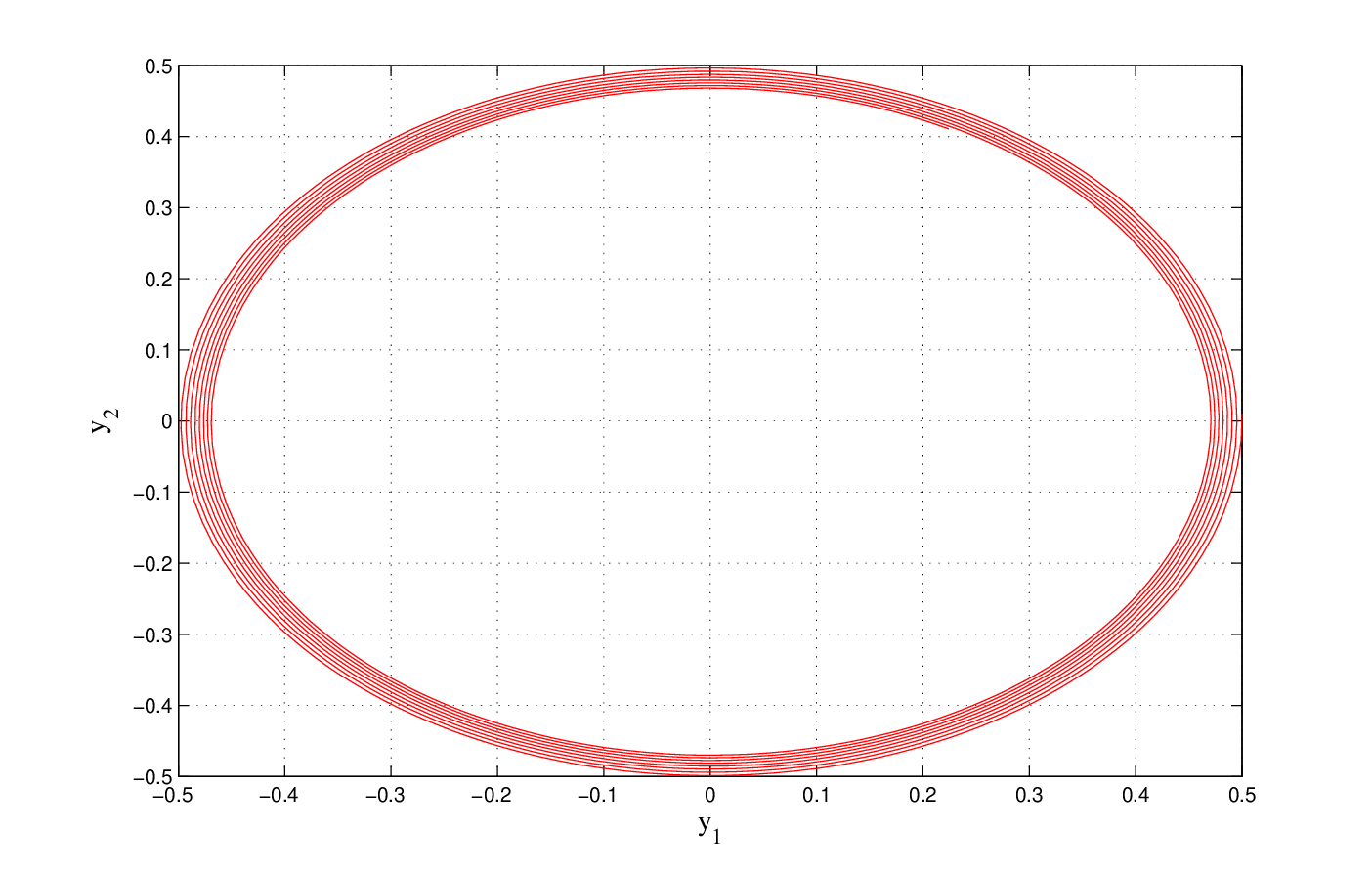}
\label{Figure:4b}
}\hfill
\subfloat[$\Delta t = 0.001$.]{%
\includegraphics[height=9cm,width=12cm]{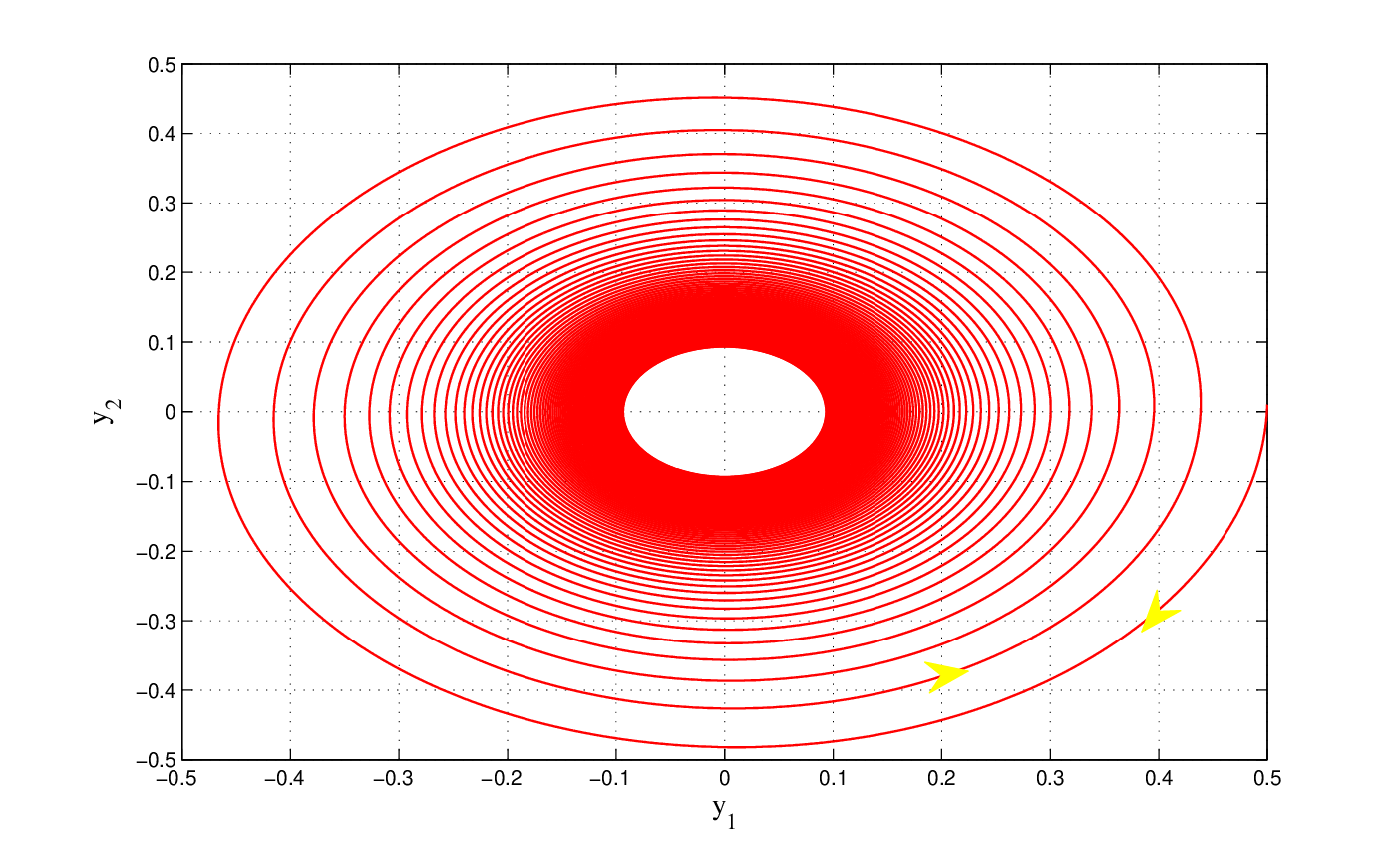}
\label{Figure:4c}
}
\caption{The approximations generated by employing the NSFD method.}
\label{Fig:4}
\end{figure}
\begin{figure}[H]
\subfloat[$V(y^k) = (y_1^k)^2 + (y_2^k)^2$.]{%
\includegraphics[height=9cm,width=12cm]{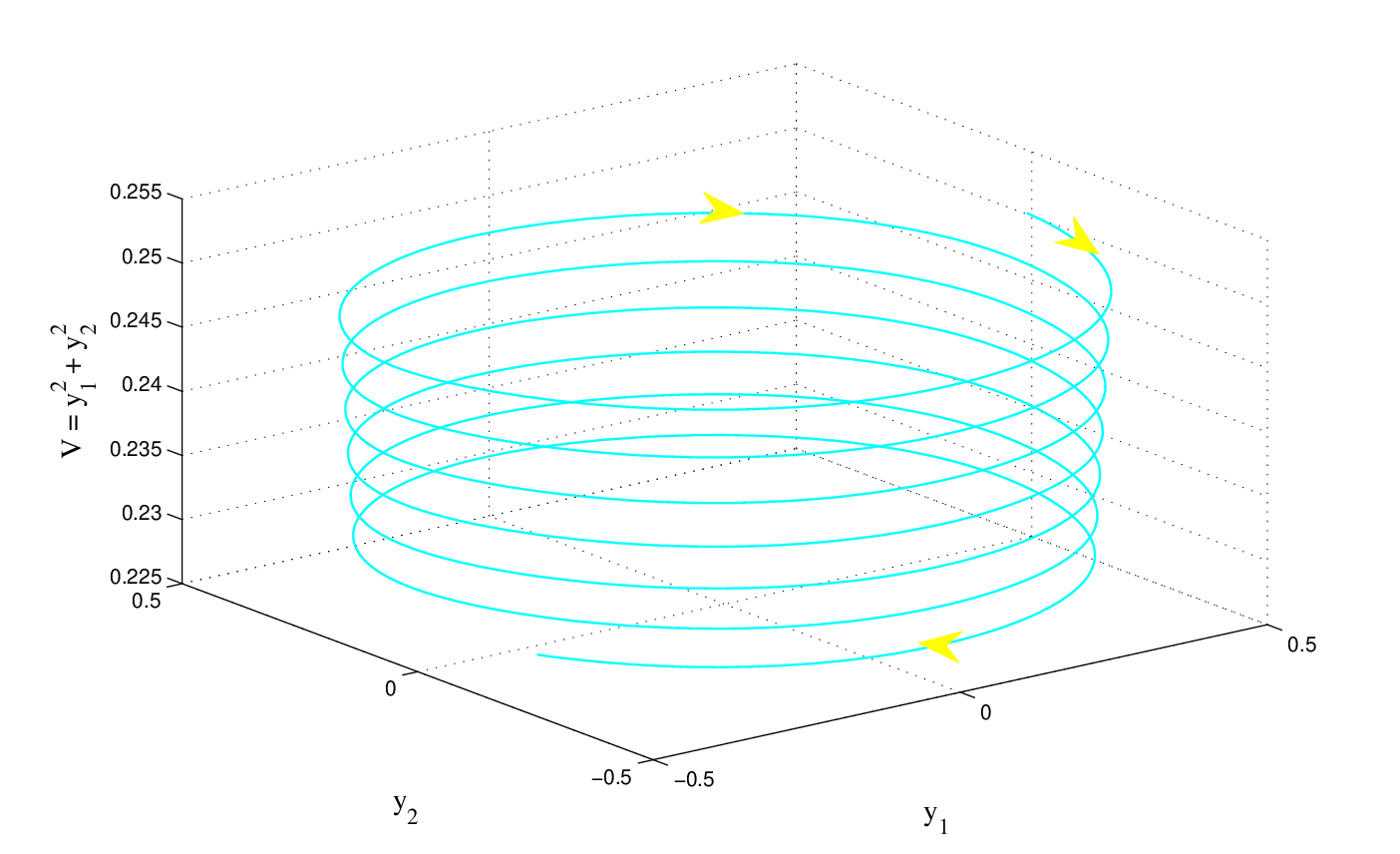}
\label{Figure:5a}
}\hfill
\subfloat[$\Delta V(y^k) = V(y^{k + 1}) - V(y^k)$.]{%
\includegraphics[height=9cm,width=12cm]{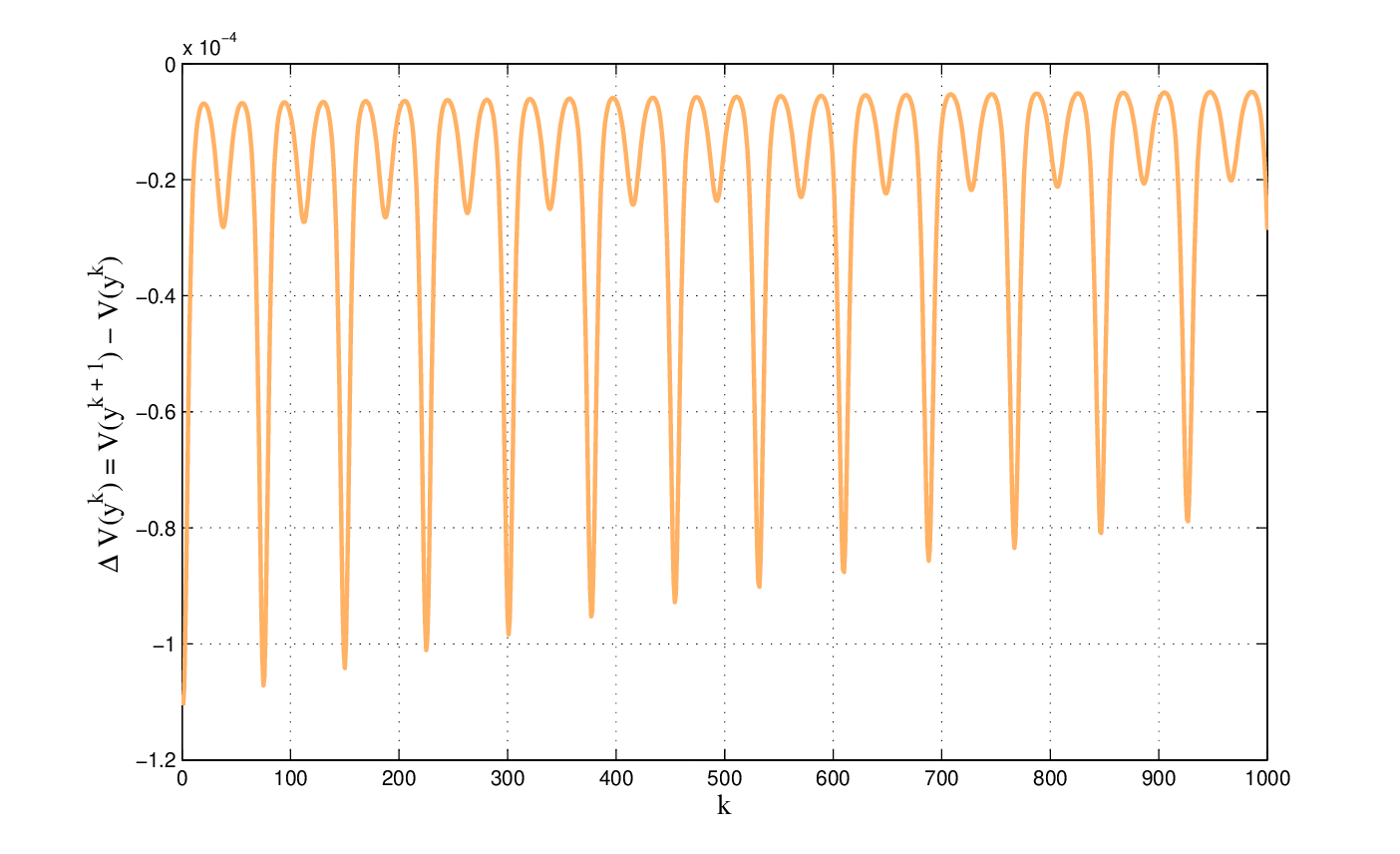}
\label{Figure:5b}
}
\caption{The discrete Lyapunov function and its variation corresponding to the NSFD method $\Delta t = 1.0$.}
\label{Fig:5}
\end{figure}
\begin{figure}[H]
\subfloat[$V(y^k) = (y_1^k)^2 + (y_2^k)^2$.]{%
\includegraphics[height=9cm,width=12cm]{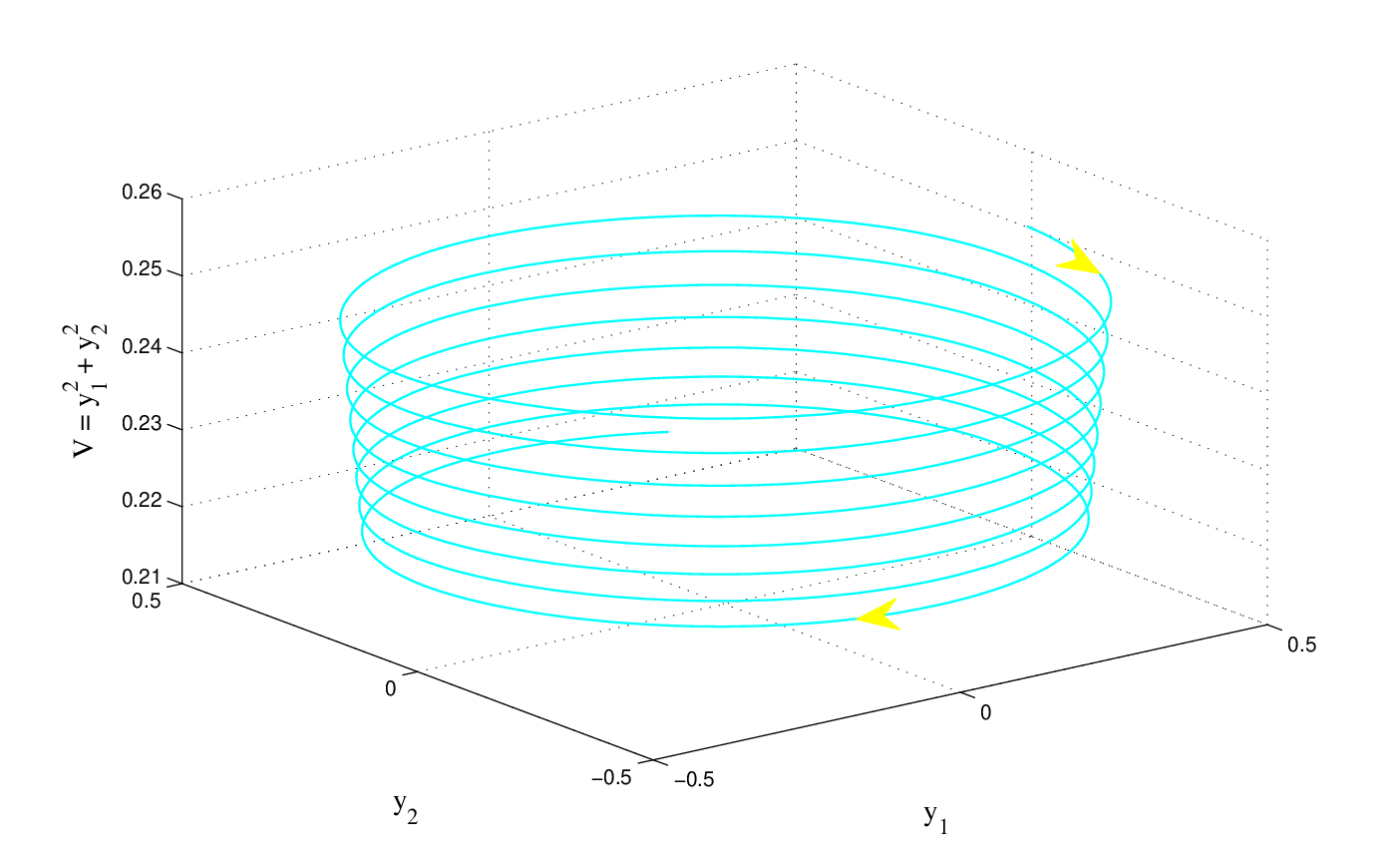}
\label{Figure:6a}
}\hfill
\subfloat[$\Delta V(y^k) = V(y^{k + 1}) - V(y^k)$.]{%
\includegraphics[height=9cm,width=12cm]{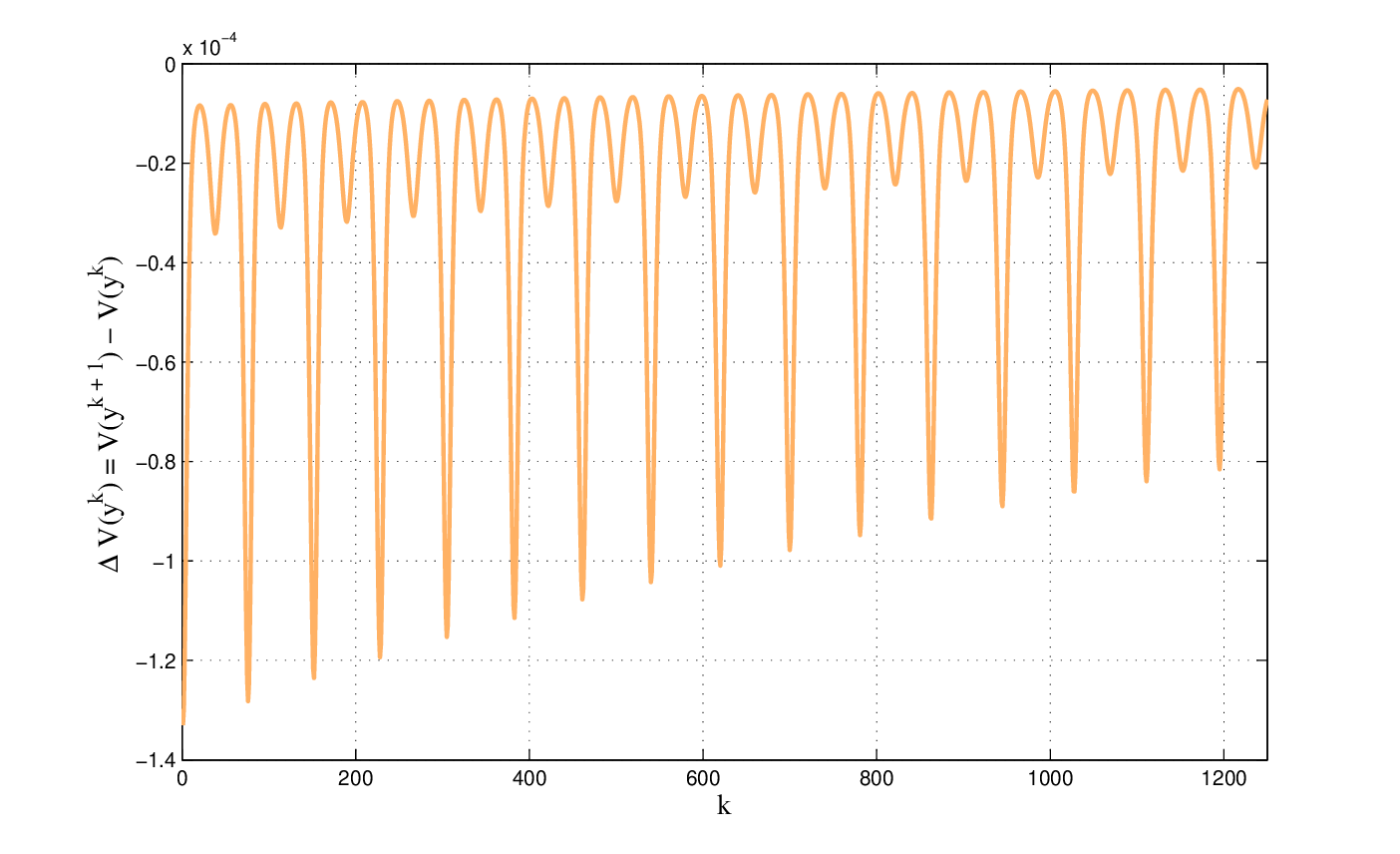}
\label{Figure:6b}
}
\caption{The discrete Lyapunov function and its variation corresponding to the NSFD method $\Delta t = 0.8$.}
\label{Fig:6}
\end{figure}
\begin{figure}[H]
\subfloat[$V(y^k) = (y_1^k)^2 + (y_2^k)^2$.]{%
\includegraphics[height=9cm,width=12cm]{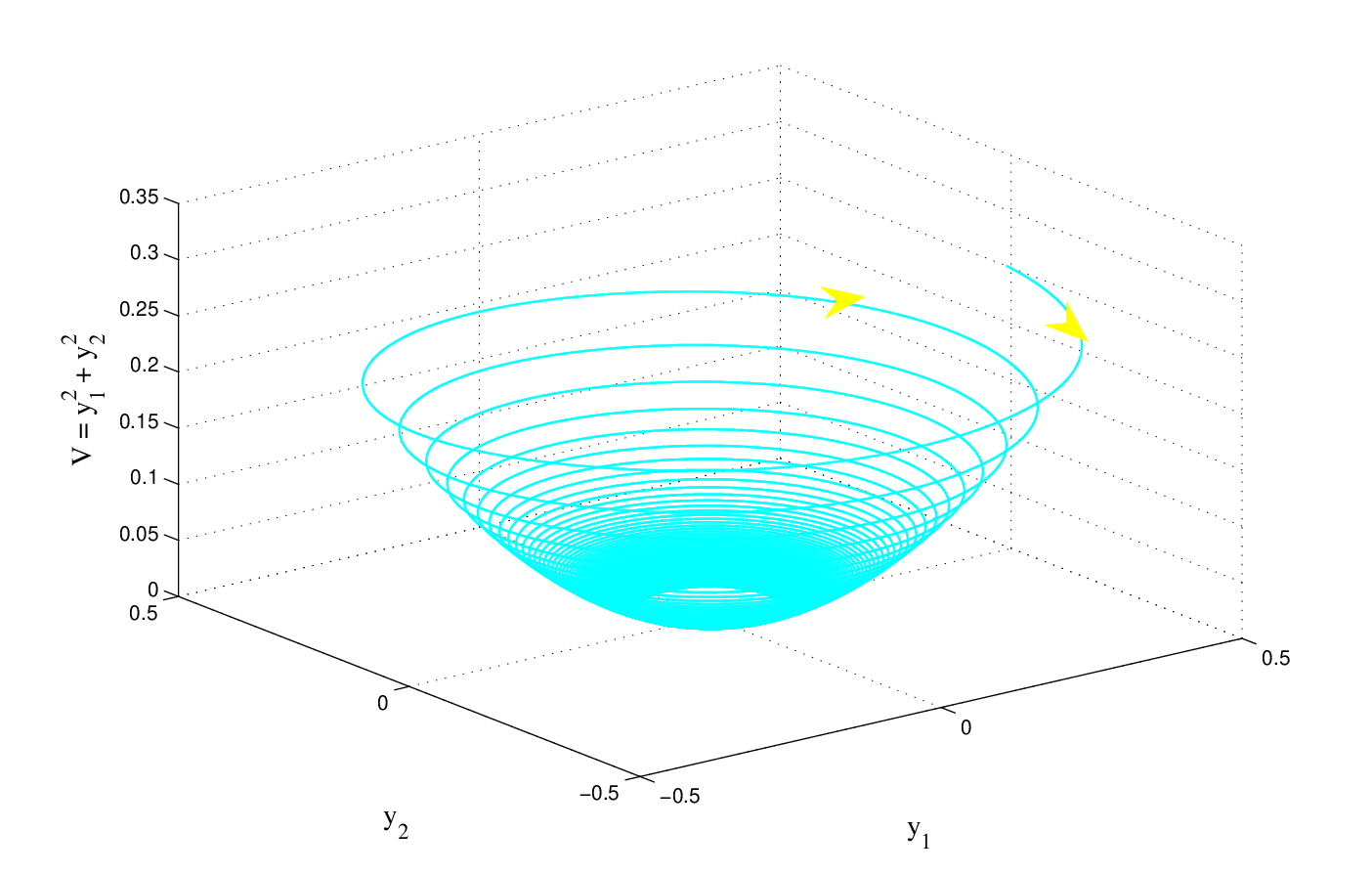}
\label{Figure:7a}
}\hfill
\subfloat[$\Delta V(y^k) = V(y^{k + 1}) - V(y^k)$.]{%
\includegraphics[height=9cm,width=12cm]{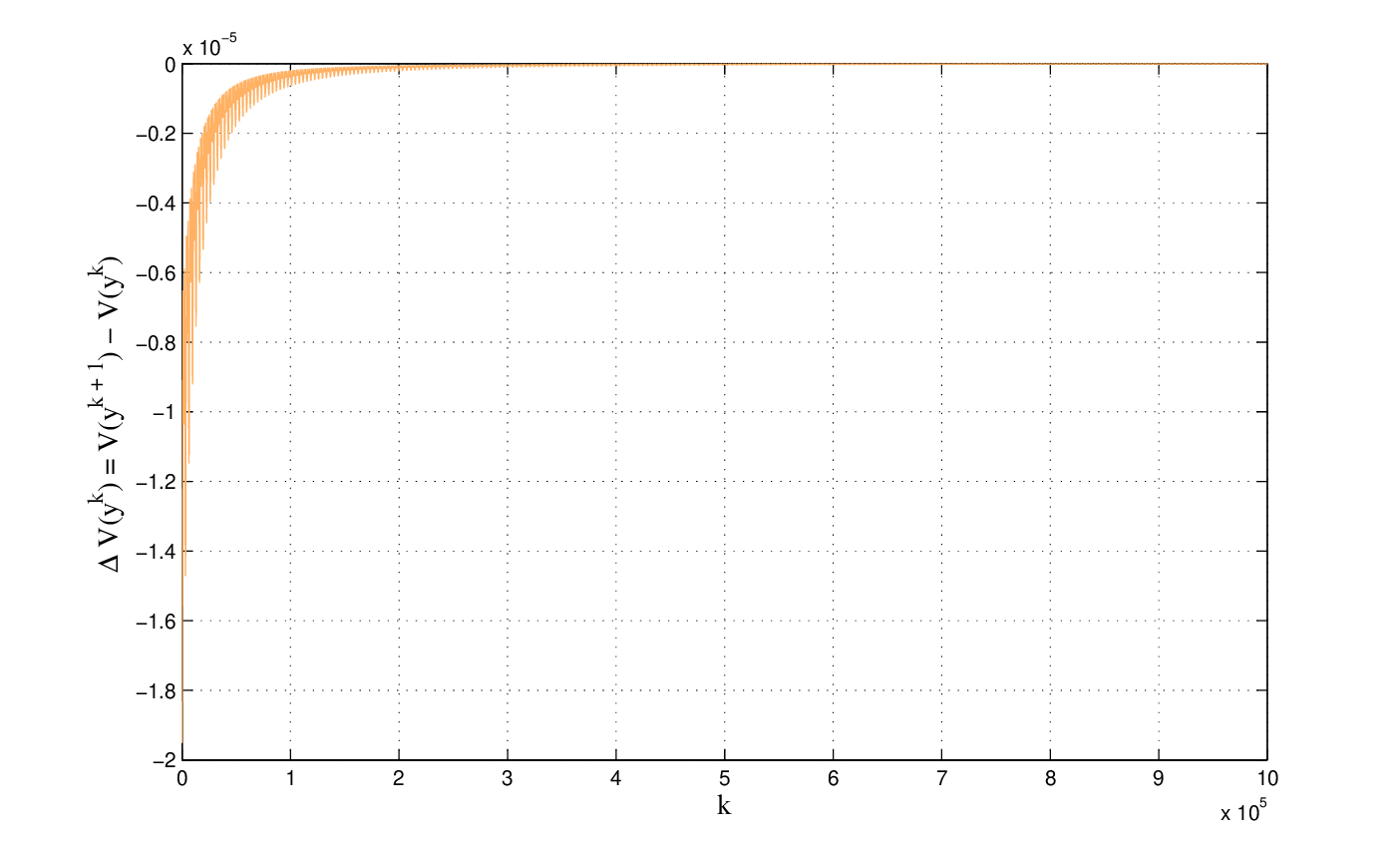}
\label{Figure:7b}
}
\caption{The discrete Lyapunov function and its variation corresponding to the NSFD method $\Delta t = 0.001$.}
\label{Fig:7}
\end{figure}

\begin{figure}[H]
\subfloat[$\Delta t = 1.0$.]{%
\includegraphics[height=9cm,width=12cm]{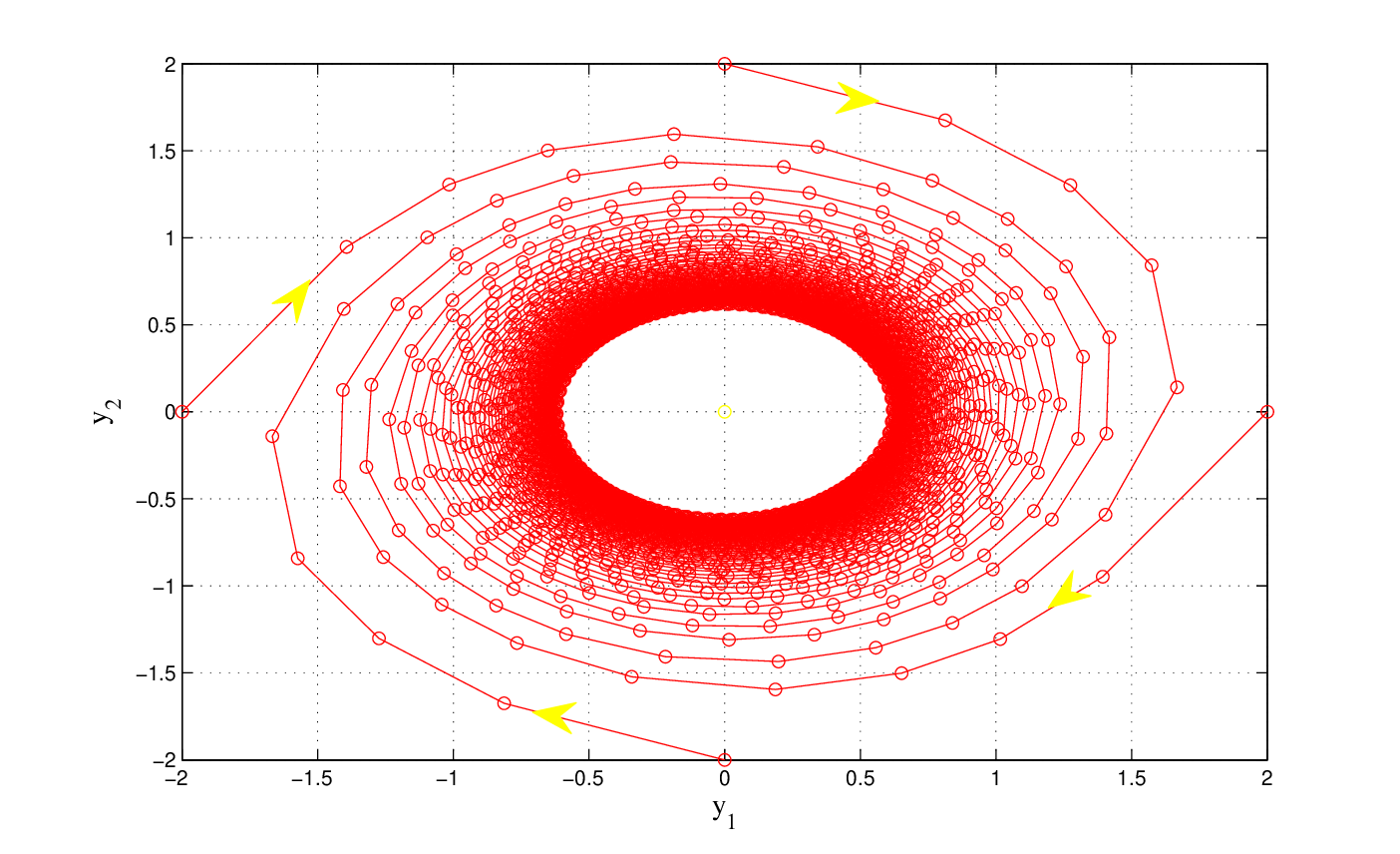}
\label{Figure:8}
}\hfill
\subfloat[$\Delta t = 0.001$.]{%
\includegraphics[height=9cm,width=12cm]{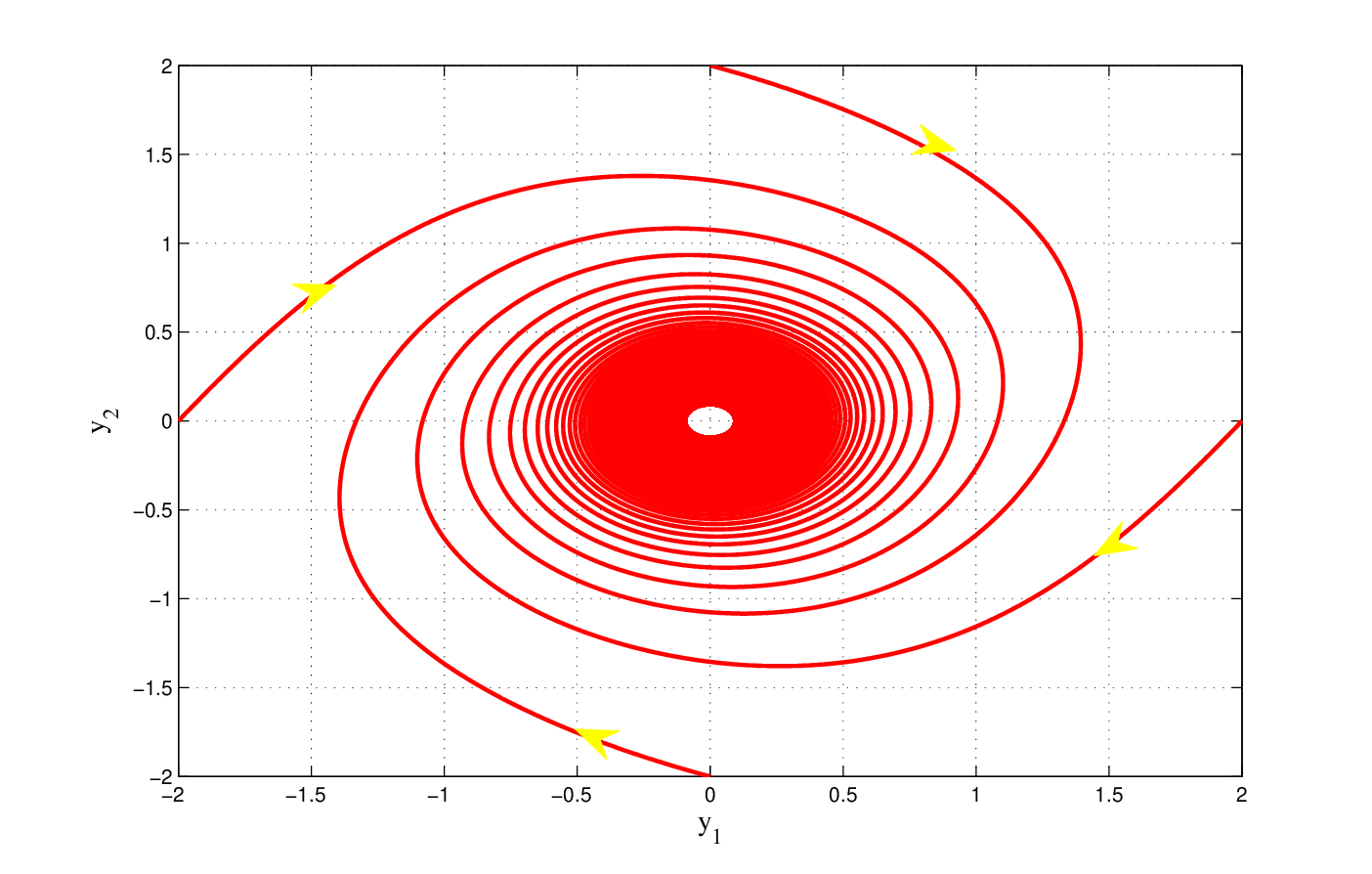}
\label{Figure:9}
}
\caption{The phase planes corresponding to some initial data generated by the NSFD method.}
\label{Fig:89}
\end{figure}
\section{On the positivity of NSFD methods}\label{sec4}
In this section, we investigate the positivity of the NSFD methods constructed in Section \ref{sec2}. Assume that \eqref{eq:1} admits the set
\begin{equation}\label{eq:17}
\mathbb{R}^n_+ = \big\{(y_1, y_2, \ldots, y_n) \in \mathbb{R}^n|y_1, y_2, \ldots, y_n \geq 0\big\}
\end{equation}
as a positively invariant set, that is $y(t) \geq 0$ whenever $y(0) \geq 0$. Here, the notation "$\geq$" are meant entry-wise for vectors. A necessary and sufficient condition for the set  $\mathbb{R}^n$
to be positively invariant for \eqref{eq:1} is (see \cite{Horvath, Smith})
\begin{equation}\label{eq:18}
f_i(y_1, y_2, \ldots, y_{i - 1}, 0, y_{i + 1}, \ldots, y_n) \geq 0, \quad \forall (y_1, y_2, \ldots, y_{i - 1}, y_{i + 1}, \ldots, y_n) \in \mathbb{R}_+^{n - 1}.
\end{equation}
We now give conditions for the NSFD model \eqref{eq:9} such that $y^0 = y(0) \geq 0$ implies $y^k \geq 0$  ($k > 0$) for all $\Delta t > 0$. Note that \eqref{eq:18} implies that if $f_i(y) < 0$, then $y_i^k > 0$.
\begin{theorem}\label{MainTheorem2}
Let $\tau_i: \mathbb{R}_+^n \to \mathbb{R}$ ($i = 1, 2, \ldots, n$) be functions satisfying
\begin{equation*}\label{eq:19}
\tau_i(y) \geq \tau_i^*(y) :=
\begin{cases}
&0 \quad \mbox{if} \quad f_i(y) \quad \geq 0,\\
&\\
&-\dfrac{f_i(y)}{y_i} \quad \mbox{if} \quad f_i(y) < 0
\end{cases}
\end{equation*}
and $\tau_P(y)$ be a function with the property that
\begin{equation*}\label{eq:19a}
\tau_P(y) \geq \max\big\{\tau_1^*(y), \tau_2^*(y), \ldots, \tau_n^*(y)\big\}.
\end{equation*}
Assume that $\tau(y): \mathbb{R}_+^n \to \mathbb{R}$ is a function satisfying
\begin{equation}\label{eq:19new}
\tau(y) \geq \tau_P(y).
\end{equation}
Then, the NSFD model \eqref{eq:9} admits the set $\mathbb{R}_+^n$ defined in \eqref{eq:17} as a positively invariant set for all the values of the step size $\Delta t$, that is $y^k \geq 0$ for $k \geq 1$ whenever $y(0) \geq 0$.
\end{theorem}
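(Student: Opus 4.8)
The plan is to argue by induction on $k$, proving that the inductive hypothesis $y^k \geq 0$ (entry-wise), together with the hypotheses \eqref{eq:19new} on $\tau$, forces $y^{k+1} \geq 0$. The base case $k = 0$ is immediate, since $y^0 = y(0) \geq 0$ by assumption. It suffices to establish the inductive step component by component, working with the first explicit form in \eqref{eq:10}, namely
\[
y_i^{k+1} = \dfrac{y_i^k + \phi(\Delta t)\big(f_i(y^k) + y_i^k\tau(y^k)\big)}{1 + \phi(\Delta t)\tau(y^k)}.
\]

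Before the case analysis I would record two preliminary facts. First, each $\tau_i^*(y)$ is non-negative by its definition (it equals $0$ when $f_i \geq 0$, and equals $-f_i/y_i > 0$ when $f_i < 0$, since then $y_i > 0$), so that $\tau(y^k) \geq \tau_P(y^k) \geq \max_i \tau_i^*(y^k) \geq 0$; consequently the denominator $1 + \phi(\Delta t)\tau(y^k)$ is strictly positive for every $\Delta t > 0$, and $y^{k+1}$ is well defined. Second, I would invoke the invariance characterization \eqref{eq:18}: under the inductive hypothesis $y^k \geq 0$, if some component satisfies $y_i^k = 0$ then \eqref{eq:18} yields $f_i(y^k) \geq 0$. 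Contrapositively, $f_i(y^k) < 0$ forces $y_i^k > 0$, which is exactly what makes the expression $\tau_i^*(y^k) = -f_i(y^k)/y_i^k$ legitimate (no division by zero) and guarantees that the piecewise definition of $\tau_i^*$ is consistent on $\mathbb{R}_+^n$ as in \eqref{eq:17}.

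The inductive step then splits on the sign of $f_i(y^k)$. If $f_i(y^k) \geq 0$, then since $y_i^k \geq 0$ and $\tau(y^k) \geq 0$ the numerator of the displayed fraction is manifestly non-negative, whence $y_i^{k+1} \geq 0$. If $f_i(y^k) < 0$, then $y_i^k > 0$ by the preceding paragraph and $\tau(y^k) \geq \tau_i^*(y^k) = -f_i(y^k)/y_i^k$; multiplying this inequality through by $y_i^k > 0$ gives $y_i^k\tau(y^k) \geq -f_i(y^k)$, that is $f_i(y^k) + y_i^k\tau(y^k) \geq 0$, so again the numerator is non-negative and $y_i^{k+1} \geq 0$. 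Since $i$ was arbitrary, $y^{k+1} \geq 0$, completing the induction.

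I do not anticipate a genuine obstacle here; the proof reduces, in both cases, to the single scalar inequality $f_i(y^k) + y_i^k\tau(y^k) \geq 0$. The only delicate point is the bookkeeping of the case distinction so that the branch $\tau_i^* = -f_i/y_i$ is only ever invoked where $y_i > 0$ is guaranteed. That guarantee is precisely the content of \eqref{eq:18} combined with the inductive hypothesis, and once it is secured the entry-wise non-negativity of $y^{k+1}$ follows for all $\Delta t > 0$, as claimed.
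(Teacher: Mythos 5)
Your proof is correct and follows essentially the same route as the paper: induction on $k$, reduction to the scalar inequality $f_i(y^k) + y_i^k\tau(y^k) \geq 0$, a case split on the sign of $f_i(y^k)$, and the use of \eqref{eq:18} together with the inductive hypothesis to guarantee $y_i^k > 0$ when $f_i(y^k) < 0$. Your explicit verification that $\tau(y^k) \geq 0$ makes the denominator positive (hence the scheme well defined) is a small point the paper leaves implicit, but it does not change the argument.
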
 
\begin{proof}
This theorem is proved by mathematical induction. Assume that $y^k \geq 0$, we need to show that $y^{k + 1} \geq 0$. Indeed, we deduce from \eqref{eq:10} that $y_i^{k + 1} \geq 0$ if
\begin{equation}\label{eq:20}
f_i(y^k) + y_i^k\tau(y^k) \geq 0.
\end{equation}
It is clear that \eqref{eq:20} is automatically satisfied if $f_i(y^k) \geq 0$. If $f_i(y^k) < 0$, then it follows from  \eqref{eq:18} that $y_i^k > 0$. Hence, \eqref{eq:20} is satisfied whenever
\begin{equation*}
\tau(y^k) \geq -\dfrac{f_i(y^k)}{y_i^k} = \tau_i^*(y^k). 
\end{equation*}
Therefore, \eqref{eq:19new} implies the preservation of positivity of the NSFD model \eqref{eq:9}. The proof is completed.
\end{proof}
\begin{remark}\label{remark2}
Combining the results constructed in Sections \ref{sec2} and \ref{sec4}, we obtain the NSFD methods that preserve the quadratic Lyapunov functions and the positivity of solutions of the continuous-time dynamical systems model \eqref{eq:1}. More clearly, the NSFD model \eqref{eq:9} preserves the quadratic Lyapunov functions and the positivity of solutions of \eqref{eq:1} if the weight $\tau(y)$ is a function taking enough large values.
\end{remark}
\section{Concluding remarks and open problems}\label{sec5}
As the main conclusion of this work, we have introduced a simple approach to the construction of NSFD methods preserving the general quadratic Lyapunov functions. The proposed NSFD methods are derived from a novel approximation for the zero vector function. We have showed that  the constructed NSFD methods have the ability to preserve any given quadratic Lyapunov function regardless of the values of the step size, i.e. they admit $V$ as a discrete Lyapunov function As an important consequence, they are dynamically consistent w.r.t the GAS of the continuous-time dynamical systems. On the other hand, we study the positivity of the proposed NSFD methods. It is proved that they can also preserve the positivity of the solutions of the continuous-time dynamical systems. Finally, the theoretical findings are supported by a series of illustrative numerical experiments, in which advantages of the NSFD methods are shown.\par
Before ending this section, we discuss some open problems that studies in NSFD methods can take.\par
%
\begin{enumerate}
\item  NSFD methods preserving general Lyapunov functions: One of the limitations of the proposed NSFD methods is that they only preserve general quadratic Lyapunov functions, whereas many well-known Lyapunov functions have been proposed (see, for example, \cite{Cangiotti, Duarte-Mermoud, Korobeinikov, Korobeinikov1, Korobeinikov2, Korobeinikov3, ORegan, Vargas-De-Leon, Vargas-De-Leon1, Vargas-De-Leon2}). For example, general Voltera-type Lyapunov functions of the form \cite{Vargas-De-Leon}
\begin{equation*}
V_{VL}(y) := \sum_{i = 1}^n\alpha_i\bigg(y_i - y_i^* - y_i^*\ln\dfrac{y_i}{y_i^*}\bigg), \quad \alpha_i > 0,
\end{equation*}
which are appropriate for dynamical systems possessing positive solutions. Another example is the combination of the quadratic and Voltera-type Lyapunov functions \cite{Vargas-De-Leon}.
\item High-order NSFD methods preserving Lyapunov functions: The convergence of order $1$ of the proposed NSFD methods can be considered as another limitation of them.
\item NSFD methods preserving Lyapunov functions for fractional-order dynamical systems: In recent years, the Lyapunov stability theory for dynamical systems described by fractional-order differential equations has been strongly developed (see, e.g., \cite{Aguila-Camacho, Duarte-Mermoud, Li}). However, NSFD methods preserving Lyapunov functions for fractional-order dynamical systems have not been studied widely.
\end{enumerate}
\textbf{Ethical Approval:} Not applicable.\\
\textbf{Availability of supporting data:} The data supporting the findings of this study are available within the article [and/or] its supplementary materials.\\
\textbf{Conflicts of Interest:} We have no conflicts of interest to disclose.\\
\textbf{Funding information:} Not available.\\
\textbf{Acknowledgments:} Not applicable.


\end{document}